\newcommand{\iid}{\emph{i.i.d.}}
\newcommand{\K}{{\mathcal K}}
\renewcommand{\bar}[1]{{\overline{#1}}}
\newcommand{\F}{{\mathcal F}}
\newcommand{\X}{{\mathcal X}}
\newcommand{\R}{\mathbb{R}}
\newcommand{\N}{\mathbb{N}}
\newcommand{\Z}{\mathbb{Z}}
\newcommand{\supp}{{\rm supp}}
\newcommand{\eps}{\varepsilon}
\renewcommand{\hat}{\widehat}
\renewcommand{\phi}{\varphi}
\newcommand*\samethanks[1][\value{footnote}]{\footnotemark[#1]}
\newcommand{\vb}[1]{\mathbf{#1}}
\newtheorem{theorem}{Theorem}
\theoremstyle{plain}
\newtheorem{algorithm}{Algorithm}
\newtheorem{lemma}{Lemma}
\newtheorem{proposition}{Proposition}
\numberwithin{equation}{section}
\title{A PDE-based approach to non-dominated sorting\thanks{The research in this paper  was partially supported by NSF grants DMS-0914567 and CCF-1217880, and by  ARO grant W911NF-09-1-0310.}}
\author{Jeff Calder\thanks{Department of Mathematics, University of Michigan. ({\tt \{jcalder,esedoglu\}@umich.edu})}
   \and Selim Esedo\=glu\samethanks
   \and Alfred O. Hero\thanks{Department of Electrical Engineering and Computer Science, University of Michigan. ({\tt hero@eecs.umich.edu})}}
\begin{document} 
\maketitle
\begin{abstract}
Non-dominated sorting is a fundamental combinatorial problem in multiobjective optimization, and is equivalent to the longest chain problem in combinatorics and random growth models for crystals in materials science.  In a previous work~\cite{calder2013}, we showed that non-dominated sorting has a continuum limit that corresponds to solving a Hamilton--Jacobi equation.  In this work we present and analyze a fast numerical scheme for this Hamilton--Jacobi equation, and show how it can be used to design a fast algorithm for approximate non-dominated sorting. 
\end{abstract}
%

\pagestyle{myheadings}
\thispagestyle{plain}
\markboth{Calder et al.}{A PDE-based approach to non-dominated sorting}

\section{Introduction}
Non-dominated sorting is a combinatorial problem that is fundamental in multiobjective optimization, which is ubiquitous is scientific and engineering contexts~\cite{ehrgott2005,deb2001,deb2002}.  The sorting can be viewed as arranging a finite set of points in Euclidean space into layers according to the componentwise partial order.  The layers are obtained by repeated removal of the set of minimal elements.  More formally, given a set $\X_n \subset \R^d$ of $n$ points equipped with the componentwise partial order $\leqq$\footnote{$x\leqq y \iff x_i \leq y_i $ for $i=1,\dots,d$.}, the first layer, often called the first Pareto front and denoted $\F_1$, is the set of minimal elements in $\X_n$.  The second Pareto front $\F_2$ is the set of minimal elements in $\X_n \setminus \F_1$, and in general the $k^{\rm th}$ Pareto front $\F_k$ is given by
\[\F_k = {\rm mimimal \ elements \ of \ } \X_n \setminus \bigcup_{i \leq k-1} \F_i.\]
In the context of multiobjective optimization, the $d$ coordinates of each point in $\X_n$ are the values of the $d$ objective functions evaluated on a given feasible solution.  In this way, each point in $\X_n$ corresponds to a feasible solution and the layers provide an effective ranking of all feasible solutions with respect to the given optimization problem.  Rankings obtained in this way are at the heart of genetic and evolutionary algorithms for multiobjective optimization, which have proven to be valuable tools for finding solutions numerically~\cite{deb2001,deb2002,fonseca1993,fonseca1995,srinivas1994}. Figure \ref{fig:example-fronts} gives a visual illustration of Pareto fronts for randomly generated points.   

\begin{figure}
\centering
\subfigure[$n=50$ points]{\includegraphics[width=0.47\textwidth]{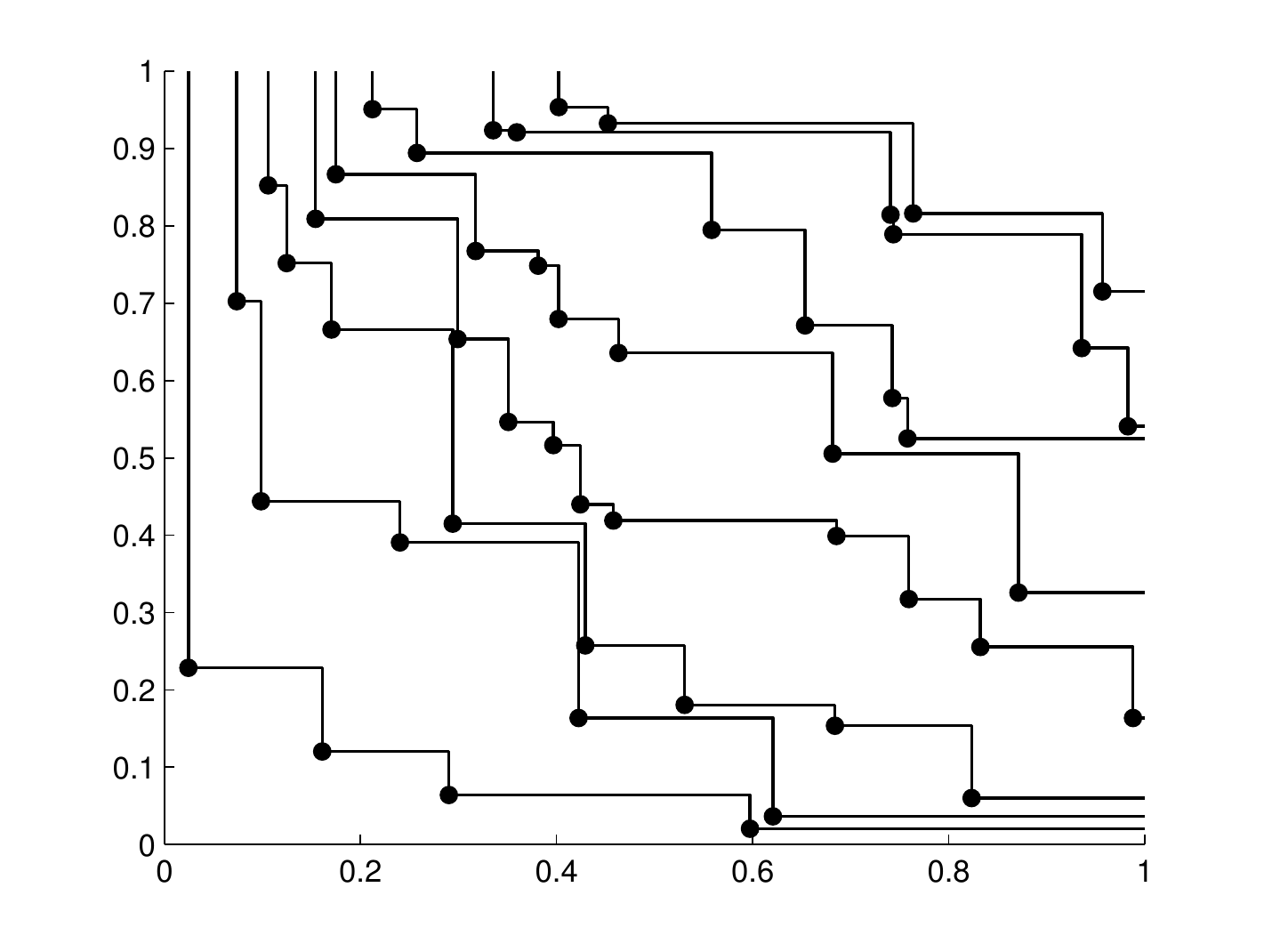}}
\subfigure[$n=10^6$ points]{\includegraphics[width=0.47\textwidth]{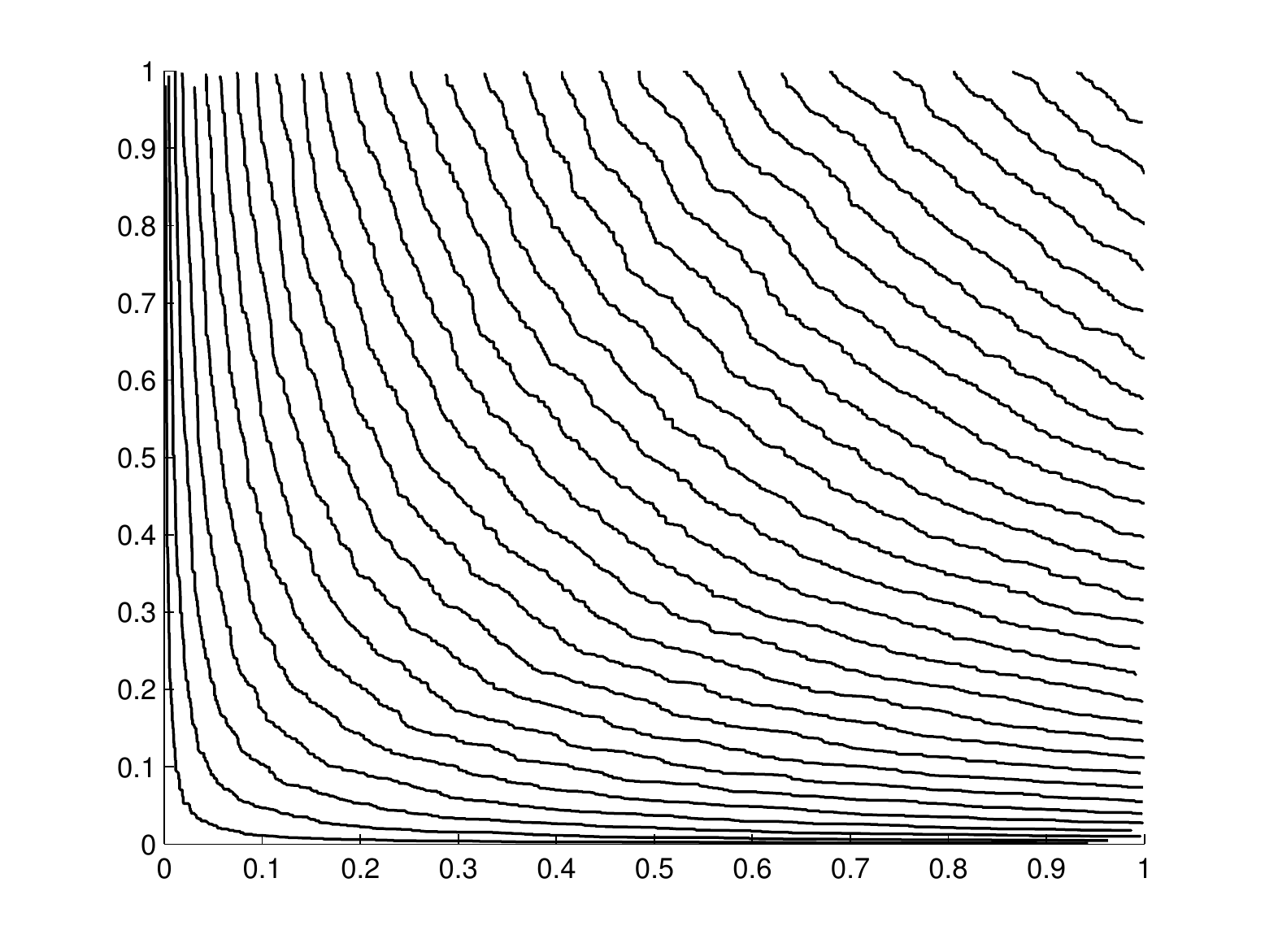}}
\caption{Examples of Pareto fronts for $X_1,\dots X_n$ chosen from the uniform distribution on $[0,1]^2$.  In (b), $29$ equally spaced fronts are depicted out of the $1938$ total fronts.}
\label{fig:example-fronts}
\end{figure}

It is important to note that non-dominated sorting is equivalent to the longest chain problem in combinatorics, which has a long history beginning with Ulam's famous problem of finding the length of a longest increasing subsequence in a sequence of numbers (see~\cite{ulam1961,hammersley1972,bollobas1988,deuschel1995,calder2013} and the references therein).  The longest chain problem is then intimately related to problems in combinatorics and graph theory~\cite{felsner1999,lou1993,viennot1984}, materials science~\cite{prahofer2000}, and molecular biology~\cite{pevzner2000}.
To see this connection, let $u_n(x)$ denote the length of a longest chain\footnote{A \emph{chain} is a totally ordered subset of $\X_n$.} in $\X_n$ consisting of points less than or equal to $x$ with respect to $\leqq$.  If all points in $\X_n$ are distinct, then a point $x \in \X_n$ is a member of $\F_1$ if and only if $u_n(x) = 1$. By peeling off $\F_1$ and making the same argument, we see that $x\in \X_n$ is a member of $\F_2$ if and only if $u_n(x)=2$.  In general, for any $x \in \X_n$ we have
\[x\in \F_k \ \iff \ u_n(x) = k.\]
This is a fundamental observation.  It says that studying the shapes of the Pareto fronts is equivalent to studying the longest chain function $u_n$.  

The longest chain problem has well-understood asymptotics as $n\to \infty$.  In this context, we assume that $\X_n=\{X_1,\dots,X_n\}$ where $X_1,\dots,X_n$ are \iid~random variables in $\R^n$ and let $\ell(n)$ denote the length of a longest chain in $\X_n$.  The seminal work on the problem was done by Hammersley~\cite{hammersley1972}, who studied the problem for $X_1,\dots,X_n$ \iid~uniform on $[0,1]^2$.  He utilized subadditive ergodic theory to show that $n^{-\frac{1}{2}}\ell(n) \to c$ in probability, where $c>0$.  He conjectured that $c=2$, and this was later proven by Vershik and Kerov~\cite{vershik1977} and Logan and Shepp~\cite{logan1977}.  Hammersley's results were generalized to higher dimensions by Bollob\'as and Winkler~\cite{bollobas1988}, who showed that $n^{-\frac{1}{d}}\ell(n) \to c_d$ almost surely, where $0 < c_d < e$ are constants tending to $e$ as $d\to \infty$. The only known values of $c_d$ are $c_1=1$ and $c_2=2$.  Deuschel and Zeitouni~\cite{deuschel1995} provided another generalization of Hammersley's results; for $X_1,\dots,X_n$ \iid~on $[0,1]^2$ with $C^1$ density function $f:[0,1]^2 \to \R$, bounded away from zero, they showed that $n^{-\frac{1}{2}}\ell(n) \to 2\bar{J}$ in probability, where $\bar{J}$ is the supremum of the energy
\[J(\phi) = \int_0^1 \sqrt{\phi'(x) f(x,\phi(x))} \, dx,\]
over all $\phi:[0,1]\to[0,1]$ nondecreasing and right continuous.  

In~\cite{calder2013}, we studied the longest chain problem for $X_1,\dots,X_n$ \iid~on $\R^d$ with density function $f:\R^d \to \R$. Under general assumptions on $f$, we showed that $n^{-\frac{1}{d}}u_n \to c_d d^{-1}U$ in $L^\infty(\R^d)$ almost surely, where $U$ is the viscosity solution of the Hamilton--Jacobi equation
\[\text{(P)} \ \ \Bigg\{\begin{aligned}
U_{x_1} \cdots U_{x_d} &{}={} f& &\text{on } \R^d_+,\\
U&{}={}0& & \text{on } \partial \R^d_+.
\end{aligned}\]
Here $\R_+ = (0,\infty)$ and $\R^d_+ = (\R_+)^d$.  

In this paper we study a fast numerical scheme for (P), first proposed in~\cite{calder2013}, and prove convergence of this scheme.  We then show how the scheme can be used to design a fast approximate non-dominated sorting algorithm, which requires access to only a fraction of the datapoints $X_1,\dots,X_n$, and we evaluate the sorting accuracy of the  new algorithm on  both synthetic and real data.  A fast approximate algorithm for non-dominated sorting has the potential to be a valuable tool for multiobjective optimization, especially in evolutionary algorithms which require frequent non-dominated sorting~\cite{deb2002}.    There are also potential applications in polynuclear growth of crystals in materials science~\cite{prahofer2000}.  Here, the scheme for (P) could be used to simulate polynuclear growth in the presence of a macroscopically inhomogeneous growth rate.  

This paper is organized as follows. In Section \ref{sec:convergence} we prove that the numerical solutions converge to the viscosity solution of (P).  We also prove a regularity result for the numerical solutions (see Lemma \ref{lem:holder}) and other important properties. In Section \ref{sec:num} we demonstrate the numerical scheme on several density functions, and in Section \ref{sec:fast} we propose a fast algorithm for approximate non-dominated sorting that is based on numerical solving (P).

\section{Numerical scheme}
\label{sec:scheme}
Let us first fix some notation.  Given $x,y \in \R^d$ we write $x\leq y$ if $x \leqq y$ and $x\neq y$.  We write $x<y$ when $x_i < y_i$ for all $i$.  For $s,t \in \R$, $\leq$ and $<$ will retain their usual definitions.  For $x \leqq y$ we define
\[[x,y] = \{z \in \R^d \, : \, x \leqq z \leqq y\}, \ \ (x,y] = \{z \in \R^d \, : \, x < z \leqq y\},\]
and make similar definitions for $[x,y)$ and $(x,y)$.
For any $x \in \R^d$ and $h>0$, there exists unique $y \in h\Z^d$ and $z \in [0,h)^d$ such that $x = y + z$.  We will denote $y$ by $\lfloor x \rfloor_h$ so that $z = x - \lfloor x \rfloor_h$. We also denote $\vb{0}=(0,\dots,0)\in \R^d$ and $\vb{1} = (1,\dots,1)\in \R^d$.  For $z \in [\vb{0},\infty)$, we denote by $\pi_z:\R^d \to [0,z]$ the projection mapping $\R^d$ onto $[\vb{0},z]$.  For $x \in [\vb{0},\infty)$ this mapping is given explicitly by
\[\pi_z(x) = (\min(x_1,z_1),\dots,\min(x_d,z_d)).\]
We say a function $u:D \subset \R^d \to \R$ is Pareto-monotone if 
\[x \leqq y  \implies u(x) \leq u(y) \ \text{for all} \ x,y \in D.\]

We now recall the numerical scheme from~\cite{calder2013}.  
Let $h>0$.   For a given $x \in [\vb{0},\infty)$, the domain of dependence for (P) is $\{y \, : \, y \leqq x\}$.  This can be seen from the connection to non-dominated sorting and the longest chain problem.   It is thus natural to consider a scheme for (P) based on backward difference quotients, yielding 
\begin{equation}\label{eq:discretize1}
\prod_{i=1}^d (U_h(x) - U_h(x - he_i)) = h^d f(x),
\end{equation}
where $U_h:h\N_0^d \to \R$ is the numerical solution of (P) and $e_1,\dots,e_d$ are the standard basis vectors in $\R^d$.
Under reasonable hypotheses on $f$, described in Section \ref{sec:convergence-proof}, there exists a unique Pareto-monotone viscosity solution of (P).  As we wish to numerically approximate this Pareto-monotone solution we may assume that $U_h(x) \geq U_h(x - he_i)$ for all $i$.  Given that $f$ is non-negative, for any $f(x),U_h(x-e_1),\dots,U_h(x-e_d)$, there is a unique $U_h(x)$ with
\[U_h(x) \geq \max(U_h(x-he_1),\dots,U_h(x-he_d)),\]
 satisfying \eqref{eq:discretize1}. Hence the numerical solution $U_h$ can be computed by visiting each grid point exactly once via any sweeping pattern that respects the partial order $\leqq$.  The scheme therefore has linear complexity in the number of gridpoints.
At each grid point, the scheme \eqref{eq:discretize1} can be solved numerically by either a binary search and/or Newton's method restricted to the interval 
\[[\max(U_h(x-he_1),\dots,U_h(x-he_d)), \max(U_h(x-he_1),\dots,U_h(x-he_d))+hf(x)^{1/d}].\]
In the case of $d=2$, we can solve the scheme \eqref{eq:discretize1} explicitly via the quadratic formula
\[U_h(x) = \frac{1}{2}(U_h(x-he_1) + U_h(x-he_2)) + \frac{1}{2}\sqrt{(U_h(x-he_1)-U_h(x-he_2))^2 + 4h^2f(x)}.\]

Now extend $U_h$ to a function $U_h:[\vb{0},\infty) \to \R$ by setting $U_h(x)=U_h(\lfloor x \rfloor_h)$.
Defining
\[\Gamma_h = [\vb{0},\infty) \setminus (h\vb{1},\infty),\]
we see that $U_h$ is a Pareto-monotone solution of the discrete scheme
\[\text{(S)} \ \ \Bigg\{\begin{aligned}
 S(h,x,U_h) &{}={} f(\lfloor x\rfloor_h),&  &\text{if } x \in (h\vb{1},\infty) \\
 U_h(x) &{}={} 0,& &\text{if }x \in \Gamma_h,
\end{aligned}\]
where $S:\R_+ \times (h\vb{1},\infty) \times X \to \R$ is defined by
\begin{equation}\label{eq:scheme-def}
S(h,x,u) =  \prod_{i=1}^d\frac{u(x) - u(x-h e_i)}{h}.
\end{equation}
Here, $X$ is the space of functions $u:[\vb{0},\infty) \to \R$.  In the next section we will study properties of solutions $U_h$ of (S).

\section{Convergence of numerical scheme}
\label{sec:convergence}

In this section we prove that the numerical solutions $U_h$ defined by (S) converge uniformly to the viscosity solution of (P).  As in~\cite{calder2013}, we place the following assumption on $f:\R^d \to [0,\infty)$:

\vspace{2mm}
\begin{itemize}
\item[(H)] There exists an open and bounded set $\Omega \subset (0,1)^d$ with Lipschitz boundary such that $f\vert_\Omega$ is Lipschitz and $\supp(f) \subset \bar{\Omega}$. 
\end{itemize}
\vspace{2mm}

\noindent It is worthwhile to take a moment to motivate the hypothesis (H).    Consider the following multi-objective optimization problem
\begin{equation}\label{eq:opt-prob}
\min \{F(x) \, : \, x \in \K\},
\end{equation}
where $F(x)=(f_1(x),\dots,f_d(x))$ with $f_i:\K \to [0,\infty)$ for all $i$, and $\K$ is the set of feasible solutions. This formulation includes many types of constrained optimization problems, where the constraints are implicitly encoded into $\K$.   If $x_1,\dots,x_n$ are feasible solutions in $\K$, then these solutions are ranked, with respect to the optimization problem \eqref{eq:opt-prob}, by performing non-dominated sorting on  $X_1=F(x_1),\dots,X_n=F(x_n)$.  Thus the domain $\Omega$ of $X_1,\dots,X_n$ is given by $\Omega = F(\K)$.  Supposing that $x_1,\dots,x_n$ are, say, uniformly distributed on $\K$, then the induced density $f$ of $X_1,\dots,X_n$ on $\R^d$ will be nonzero on $\Omega$ and identically zero on $\R^d \setminus \Omega$. Thus, the constraint that feasible solutions must lie in $\K$ directly induces a discontinuity in $f$ along $\partial \Omega$. 

In~\cite{calder2013} we showed that, under hypothesis (H), there exists a unique Pareto-monotone viscosity solution $U$ of (P) satisfying the additional boundary condition
\begin{equation}\label{eq:add-bound}
U(x)  = U(\pi_\vb{1}(x)) \ \ \text{for all } x \in [\vb{0},\infty).
\end{equation}
The boundary condition \eqref{eq:add-bound} is natural for this problem.  Indeed, since $\supp(f) \subset (0,1)^d$, there are almost surely no random variables drawn outside of $(0,1)^d$.  Hence, for any $x \in [\vb{0},\infty)$ we can write
\[u_n(x) = \max_{y \in [0,1]^d \, : \,  y\leqq x} u_n(y).\]
Since $u_n$ is Pareto-monotone, the maximum above is attained at $y=\pi_\vb{1}(x)$, and hence $u_n(x) = u_n(\pi_\vb{1}(x))$.   

For completeness, let us now give a brief outline of the proof of uniqueness for (P).  For more details, we refer the reader to~\cite{calder2013}.  The proof is based on the auxiliary  function technique, now standard in the theory of viscosity solutions~\cite{crandall1992}.  However, the technique must be modified to account for the fact that $f$ is possibly discontinuous on $\partial \Omega$, and hence does not possess the required uniform continuity.  A commonly employed technique is to modify the auxiliary function so that only a type of one-sided uniform continuity is required of $f$~\cite{tourin1992,deckelnick2004}.  This allows $f$ to, for example, have a discontinuity along a Lipschitz curve, provided the jump in $f$ is locally in the same direction (see \cite{deckelnick2004} for more details).  We cannot directly use these results because they require coercivity or uniform continuity of the Hamiltonian and/or Lipschitzness of solutions---none of which hold for (P).    Our technique for proving uniqueness for (P) employs instead an important property of viscosity solutions of (P)---namely that for any $z\in \R^d_+$, $U^z := U\circ \pi_z$ is a viscosity subsolution of (P). This property, called \emph{truncatability} in~\cite{calder2013}, follows immediately from the variational principle~\cite{calder2013}
\[U(x) = \sup_{\gamma'\geqq 0\, : \, \gamma(1)=x} \int_0^1 f(\gamma(t))^\frac{1}{d} (\gamma_1'(t) \cdots \gamma_d'(t))^\frac{1}{d} \, dt.\]
This allows us to prove a comparison principle with no additional assumptions on the Hamiltonian.

A general framework for proving convergence of a finite-difference scheme to the viscosity solution of a non-linear second order PDE was developed by Barles and Souganidis~\cite{barles1991}.  Their framework requires that the scheme be stable, monotone, consistent, and that the PDE satisfy a \emph{strong uniqueness property}~\cite{barles1991}.  The monotonicity condition is equivalent to ellipticity for second order equations, and plays a similar role for first order equations, enabling one to prove maximum and/or comparison principles for the discrete scheme.  The strong uniqueness property refers to a comparison principle that holds for semicontinuous viscosity sub- and supersolutions.

The numerical scheme (S) is easily seen to be consistent; this simply means that
\[\lim_{\substack{y\to x \\ h \to 0}} S(h,y,\phi) = \phi_{x_1}(x) \cdots \phi_{x_d}(x),\]
for all $\phi \in C^1(\R^d_+)$.  The scheme is stable~\cite{barles1991} if the numerical solutions $U_h$ are uniformly bounded in $L^\infty$, independent of $h$.   It is not immediately obvious that (S) is stable; stability follows from the discrete comparison principle for (S) (Lemma \ref{lem:discrete-comp}) and is proved in Lemma \ref{lem:holder}.  The monotonicity property requires the following:
\[S(h,x,u) \leq S(h,x,v) \ \ \text{whenever } u \geq v \text{ and } u(x)=v(x).\]
It is straightforward to verify that (S) is monotone when restricted to Pareto-monotone $u,v$.   This is sufficient since we are only interested in the Pareto-monotone viscosity solution of (P).  All that is left is to establish a strong uniqueness result for (P).  Unfortunately such a result is not available under the hypothesis (H).  Since $f$ may be discontinuous along $\partial \Omega$, we can only establish a comparison principle for continuous viscosity sub- and supersolutions (see \cite[Theorem 4]{calder2013}).  

One way to rectify this situation is to break the proof into two steps.   First prove convergence of the numerical scheme for $f$ Lipschitz on $\R^d_+$. It is straightforward in this case to establish a strong uniqueness result for (P).  Second, extend the result to $f$ satisfying (H) by an approximation argument using inf and sup convolutions.  Although this approach is fruitful, we take an alternative approach as it yields an interesting regularity property for the numerical solutions.  In particular, in Lemma \ref{lem:holder} we establish  approximate H\"older regularity of $U_h$ of the form
\begin{equation}\label{eq:holder-est}
|U_h(x) - U_h(y)| \leq C(|x-y|^\frac{1}{d} + h^\frac{1}{d}).
\end{equation}
As we verify in Appendix A, the approximate H\"older estimate \eqref{eq:holder-est} along with the stability of (S) allows us to apply the Arzel\`a-Ascoli Theorem, with a slightly modified proof, to the sequence $U_h$.  This allows us to substitute the ordinary uniqueness result from~\cite{calder2013} in place of strong uniqueness. 

\subsection{Analysis of the numerical scheme}
\label{sec:analysis}

We first prove a discrete comparison principle for the scheme (S).  This comparison principle is essential in proving stability of (S) and the approximate H\"older regularity result in Lemma \ref{lem:holder}. For the remainder of this  section, we fix $h>0$.
\begin{lemma}[Comparison principle]\label{lem:discrete-comp}
Let $z \in (h\vb{1},\infty)$ and suppose $u,v \in L^\infty_{loc}([\vb{0},\infty))$ are Pareto-monotone and satisfy
\begin{equation}\label{eq:super-sub}
S(h,x,u) \leq S(h,x,v) \ \ \text{for all } x \in (h \vb{1},z].
\end{equation}
Then $u\leq v$ on $\Gamma_h \cap [\vb{0},z]$ implies that $u\leq v$ on $[\vb{0},z]$.  
\end{lemma}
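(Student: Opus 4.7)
The plan is to prove the contrapositive in disguise: starting from an arbitrary point $x \in (h\vb{1},z]$, I will descend along a chain of grid-aligned moves $x^{(0)} = x, x^{(1)}, x^{(2)}, \ldots$ obtained by subtracting $he_{i_k}$ at each step, so that $(u-v)(x^{(k)})$ is non-decreasing in $k$ and the chain reaches $\Gamma_h \cap [\vb{0},z]$ in finitely many steps. Once there, the boundary hypothesis gives $(u-v)(x^{(N)}) \leq 0$, forcing $(u-v)(x) \leq 0$.

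The engine of the descent is the following local claim: for every $x \in (h\vb{1},z]$, there is some index $i$ with
\[
u(x - he_i) - v(x - he_i) \;\geq\; u(x) - v(x).
\]
To see this, set $a_i := u(x) - u(x-he_i)$ and $b_i := v(x) - v(x-he_i)$. Pareto-monotonicity gives $a_i, b_i \geq 0$, and \eqref{eq:super-sub} at $x$ reads $\prod_i a_i \leq \prod_i b_i$. The claim is equivalent to the existence of an index $i$ with $a_i \leq b_i$. Suppose not, so $a_i > b_i$ for every $i$; then every $a_i > 0$, while if some $b_j = 0$ we still have $a_j > 0$, making $\prod_i a_i > 0 = \prod_i b_i$, whereas if all $b_i > 0$ the strict inequalities $a_i > b_i > 0$ multiply to $\prod_i a_i > \prod_i b_i$. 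Either alternative contradicts $\prod_i a_i \leq \prod_i b_i$, proving the claim.

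The iteration is then straightforward: as long as $x^{(k)} \in (h\vb{1}, z]$, apply the claim to produce $x^{(k+1)} = x^{(k)} - he_{i_k} \in [\vb{0}, z]$, which satisfies $(u-v)(x^{(k+1)}) \geq (u-v)(x^{(k)})$. The $\ell^1$-norm of the coordinates drops by $h$ at each step, so within at most $\lceil (z_1 + \cdots + z_d)/h\rceil$ steps we reach some $x^{(N)}$ with at least one coordinate $\leq h$, i.e., $x^{(N)} \in \Gamma_h \cap [\vb{0}, z]$. Chaining the inequalities and invoking the hypothesis on $\Gamma_h$ gives
\[
u(x) - v(x) \;\leq\; u(x^{(N)}) - v(x^{(N)}) \;\leq\; 0.
\]
Since the starting $x \in (h\vb{1}, z]$ was arbitrary and the boundary inequality is assumed on $\Gamma_h \cap [\vb{0},z]$, we conclude $u \leq v$ on $[\vb{0},z]$.

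I expect the only subtle point to be the case analysis in the local claim when some $b_j$ vanishes, which prevents the naive argument ``$a_i > b_i$ for all $i$ implies $\prod a_i > \prod b_i$'' via division; splitting into ``some $b_j = 0$'' versus ``all $b_j > 0$'' handles this cleanly. Everything else is bookkeeping: the descent is guaranteed to terminate because each step strictly decreases the coordinate sum by $h$, and the chain never leaves $[\vb{0}, z]$ because $x^{(k)} \in (h\vb{1}, z]$ guarantees $x^{(k)} - he_i \in [\vb{0}, z]$.
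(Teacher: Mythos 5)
Your proof is correct. It differs in architecture from the paper's: the paper argues by contradiction at an extremal level set, defining $T_r=\{x:\tfrac1d(x_1+\cdots+x_d)\le r\}$ and the supremal level $R$ up to which $u\le v$ holds, then locating a violating point just above $R$ whose backward neighbors $x-he_i$ lie below $R$, and deriving $S(h,x,u)>S(h,x,v)$ from the chain $\prod_i\bigl(u(x)-u(x-he_i)\bigr)>\prod_i\bigl(v(x)-u(x-he_i)\bigr)\ge\prod_i\bigl(v(x)-v(x-he_i)\bigr)$. You instead isolate a clean pigeonhole fact---if $a_i,b_i\ge 0$ and $\prod_i a_i\le\prod_i b_i$, then $a_i\le b_i$ for some $i$---and use it to propagate the inequality $(u-v)(x)\le(u-v)(x-he_i)$ pointwise backward along an explicit chain that must hit $\Gamma_h\cap[\vb{0},z]$ in at most $(z_1+\cdots+z_d)/h$ steps. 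Your careful splitting into the cases ``some $b_j=0$'' versus ``all $b_j>0$'' is exactly the degeneracy the paper's strict-product comparison also has to respect (there it is handled because all left-hand factors are strictly positive), so both arguments rest on the same elementary observation about products of nonnegative differences. What your route buys is a direct, quantitative descent with no supremum or contradiction scaffolding; in particular it never uses the $L^\infty_{loc}$ hypothesis and gives an explicit bound on the length of the comparison chain. What the paper's route buys is a structure (sweeping over level sets of the coordinate sum, extremal point argument) that mirrors the auxiliary-function style of viscosity-solution comparison proofs, which is the template the rest of the paper leans on. Both are complete; your local claim could even be quoted verbatim as a lemma to streamline the paper's final display.
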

\begin{proof}
Suppose that $\sup_{[\vb{0},z]} (u-v) > 0$ and set 
\[T_r = \left\{x \in [\vb{0},\infty) \, : \, \frac{1}{d}(x_1+\cdots+x_d) \leq r \right\},\]
and
\[R  = \sup\{r > 0 \, : \, u \leq v \ \text{on} \ T_r\cap [\vb{0},z]\}.\]
Since $u\leq v$ on $\Gamma_h\cap [\vb{0},z)$ and $\sup_{[\vb{0},z]} (u-v) > 0$, we must have $ R \in [h,s] $, where $s=d^{-1}(z_1 + \cdots + z_d)$.  By the definition of $R$, there exists $x \in (h\vb{1},z]$ and $s < R$ such that
\[u(x) > v(x) \ \text{and} \ x-h e_i \in T_s \ \text{for} \ i=1,\dots,d.\]
Since $s < R$, we have $u\leq v$ on $T_s\cap[\vb{0},z]$ and hence
\begin{equation}\label{eq:comp-key}
u(x - h e_i) \leq v(x - h e_i) \leq v(x) \ \ \text{for} \ i=1,\dots,d.
\end{equation}
The second inequality above follows from Pareto-monotonicity of $v$.
Since $u$ and $v$ are Pareto-monotone and $u(x) > v(x)$ we have
\[\prod_{i=1}^d (u(x) - u(x-h e_i)) {}>{} \prod_{i=1}^d (v(x) - u(x-h e_i)) \stackrel{\eqref{eq:comp-key}}{\geq} \prod_{i=1}^d (v(x) - v(x-h e_i)).\]
Hence $S(h,x,u) > S(h,x,v)$, contradicting the hypothesis.
\end{proof}

Using the comparison principle, we can establish that numerical solutions of (S) satisfy the boundary condition at infinity \eqref{eq:add-bound}.
\begin{proposition}\label{prop:boundary_condition}
Let $u \in L_{loc}^\infty([\vb{0},\infty))$ be Pareto-monotone with $u=0$ on $\Gamma_h$. Suppose that for some $z \in (h\vb{1},\infty)$ we have
\begin{equation}\label{eq:support}
\supp\{x \mapsto S(h,x,u) \} \subset [\vb{0},z].
\end{equation}
Then we have $u = u \circ \pi_z$.
\end{proposition}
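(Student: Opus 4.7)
The plan is to apply the discrete comparison principle (Lemma \ref{lem:discrete-comp}) to $u$ and the candidate $v := u \circ \pi_z$, concluding that $u \leq v$ on all of $[\vb{0},\infty)$ and combining with the trivial reverse inequality.

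First I would record the preliminary monotonicity and positivity facts. Pareto-monotonicity of $v$ is immediate from that of $u$ together with the componentwise monotonicity of $\pi_z$. The inequality $v \leq u$ on $[\vb{0},\infty)$ is equally immediate: $\pi_z(x) \leqq x$ for every $x$, so Pareto-monotonicity of $u$ yields $v(x) = u(\pi_z(x)) \leq u(x)$. Since $u$ is Pareto-monotone with $u(\vb{0}) = 0$, we also have $u \geq 0$ everywhere, hence $v \geq 0$; in particular $u = 0 \leq v$ on $\Gamma_h$, which supplies the boundary comparison required by Lemma \ref{lem:discrete-comp}.

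The main step is to verify the scheme inequality $S(h,x,u) \leq S(h,x,v)$ for every $x \in (h\vb{1},\infty)$, which I will split into two cases. For $x \in (h\vb{1},z]$, the projection $\pi_z$ acts as the identity on $x$ and on each $x - he_i$ (since $h\vb{1} < x \leqq z$ guarantees $x - he_i \in [\vb{0},z]$), so $v$ agrees with $u$ on the entire five-point stencil and $S(h,x,u) = S(h,x,v)$. For $x \in (h\vb{1},\infty)\setminus [\vb{0},z]$, the support hypothesis \eqref{eq:support} forces $S(h,x,u) = 0$, while $S(h,x,v) \geq 0$ by Pareto-monotonicity of $v$; the required inequality follows.

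With these ingredients assembled, I would invoke Lemma \ref{lem:discrete-comp} on each box $[\vb{0},z']$ with $z' \in (h\vb{1},\infty)$ and $z \leqq z'$ to conclude $u \leq v$ on $[\vb{0},z']$, and then let $z' \to \infty$ componentwise to obtain $u \leq v$ on $[\vb{0},\infty)$. Combined with $v \leq u$, this gives $u = u \circ \pi_z$. I do not expect any serious obstacle; the only point requiring care is verifying in the first case that the whole backward stencil $\{x - he_1,\dots,x - he_d\}$ sits inside $[\vb{0},z]$ so that the projection really is trivial there, after which the rest is bookkeeping.
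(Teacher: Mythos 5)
Your proposal is correct and follows essentially the same route as the paper: set $v = u\circ\pi_z$, observe $v\leq u$ and $u\leq v$ on $\Gamma_h$, verify $S(h,\cdot,u)\leq S(h,\cdot,v)$ off $\Gamma_h$ by the same two-case split (equality of the backward stencils inside $[\vb{0},z]$, and $S(h,x,u)=0\leq S(h,x,v)$ outside by the support hypothesis and Pareto-monotonicity), then invoke Lemma \ref{lem:discrete-comp}; your exhaustion by boxes $[\vb{0},z']$ is just a slightly more careful way of applying the lemma on all of $[\vb{0},\infty)$. (Only a cosmetic slip: the backward stencil has $d+1$ points, not five, except when $d=2$.)
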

\begin{proof}
Define $v = u \circ \pi_z$ and fix $x \in [\vb{0},\infty)$.  Since $u$ is Pareto-monotone and $\pi_z(x) \leqq x$, we have $v(x) = u(\pi_z(x)) \leq u(x)$.  Hence $v \leq u$.  Since $u = v$ on $[\vb{0},z]$ we have
\[S(h,x,u) = S(h,x,v) \ \ \text{ for all } x \in [\vb{0},z]\setminus \Gamma_h.\]
For $x \not\in[\vb{0},z]\cup \Gamma_h$ we have $S(h,x,u)  = 0$ by assumption.  Since $v$ is Pareto-monotone we have $S(h,x,v) \geq 0 = S(h,x,u)$ for such $x$, and hence $S(h,x,v) \geq S(h,x,u)$ for all $x \in [\vb{0},\infty) \setminus \Gamma_h$.
Since $v=u = 0$ on $\Gamma_h$ we can apply Lemma \ref{lem:discrete-comp} to find that $u \leq v$ on $[\vb{0},\infty)$, and hence $u=v=u \circ \pi_z$.
\end{proof}

An important consequence of the comparison principle is the following approximate H\"older regularity result.  
\begin{lemma}\label{lem:holder}
Let $u \in L^\infty_{loc}([\vb{0},\infty))$ be Pareto-monotone with $u=0$ on $\Gamma_h$.  Then for any $R>0$ we have 
\begin{equation}\label{eq:cont}
|u(x) - u(y)| \leq 2d^2R^\frac{d-1}{d}\|S(h,\cdot,u)\|_{L^\infty((h,R]^d)}^\frac{1}{d}( |x - y|^\frac{1}{d} + h^\frac{1}{d})
\end{equation}
for all $x,y \in (h,R]^d$.
\end{lemma}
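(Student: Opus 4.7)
The plan combines a triangle-inequality reduction with a discrete comparison against an explicit extremal supersolution of the scheme. Set $M = \|S(h,\cdot,u)\|_{L^\infty((h,R]^d)}^{1/d}$.

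First I would set $z$ to be the componentwise maximum of $x$ and $y$. Pareto-monotonicity yields $u(z) \geq u(x)$ and $u(z) \geq u(y)$, while $|z - x|, |z - y| \leq |y - x|$, so
\[|u(x) - u(y)| \leq (u(z) - u(x)) + (u(z) - u(y)),\]
reducing the problem to the ordered case $x \leqq y$ with the constant halved. Since $u(\cdot) = u(\lfloor \cdot \rfloor_h)$, I further reduce to grid points (handling the cases where some coordinate of $\lfloor x \rfloor_h$ or $\lfloor y \rfloor_h$ equals $h$ separately, via $u = 0$ on $\Gamma_h$).

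Second, for grid points $x \leqq y$ with all coordinates at least $2h$, telescope along the coordinate staircase path $x = w_0, w_1, \ldots, w_d = y$ with $w_k = w_{k-1} + (y_k - x_k) e_k$, giving $u(y) - u(x) = \sum_{k=1}^d (u(w_k) - u(w_{k-1}))$. Each single-coordinate increment is controlled by Lemma \ref{lem:discrete-comp} applied to the explicit supersolution
\[\phi_k(\zeta) = u(w_{k-1}) + dM\,(\zeta_k - w_{k-1,k} + h)_+^{1/d}\prod_{j \neq k} \zeta_j^{1/d}.\]
A direct computation gives $\prod_i \phi_{k,\zeta_i}(\zeta) = M^d$ (this is the extremal solution of the continuous PDE $\prod_i V_{\zeta_i} = M^d$, shifted to match $u(w_{k-1})$), and by concavity of $t \mapsto t^{1/d}$ in each variable,
\[S(h,\zeta,\phi_k) \geq \prod_i \phi_{k,\zeta_i}(\zeta) = M^d \geq S(h,\zeta,u)\]
wherever $\phi_k$ is smooth. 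Also $\phi_k \geq u(w_{k-1}) \geq 0 = u$ on $\Gamma_h$ and $\phi_k$ is Pareto-monotone, so the comparison principle gives $u \leq \phi_k$. Evaluating at $w_k$ yields
\[u(w_k) - u(w_{k-1}) \leq dM(y_k - x_k + h)^{1/d}\prod_{j \neq k} w_{k-1,j}^{1/d} \leq dM R^{(d-1)/d}(y_k - x_k + h)^{1/d}.\]

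Third, summing over $k$ and using $(a+b)^{1/d} \leq a^{1/d} + b^{1/d}$ together with $\sum_k (y_k - x_k)^{1/d} \leq d|y-x|^{1/d}$, we get $u(y) - u(x) \leq d^2 M R^{(d-1)/d}(|y-x|^{1/d} + h^{1/d})$. The triangle-inequality reduction then supplies the remaining factor of $2$, matching the claimed constant $2d^2$.

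The main obstacle is verifying that $\phi_k$ is a genuine discrete supersolution at every grid point Lemma \ref{lem:discrete-comp} requires it. The function has a one-sided kink at $\zeta_k = w_{k-1,k} - h$ where the positive-part truncation engages, and the concavity-based argument giving $S(h,\zeta,\phi_k) \geq M^d$ can break down there. I would address this either by applying the comparison on a translated rectangle arranged so the problematic grid points lie on the lower boundary face (where $\phi_k \geq u$ follows from Pareto-monotonicity or the $u = 0$ on $\Gamma_h$ condition), or by replacing $h$ with a slightly larger multiple $c_d h$ in the shift so that $\phi_k$ is smooth on the entire comparison region, absorbing the enlarged constant into the $h^{1/d}$ term of the final estimate. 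The remaining verifications --- Pareto-monotonicity of $\phi_k$, its non-negativity on $\Gamma_h$, the identity $\prod_i \phi_{k,\zeta_i}(\zeta) = M^d$, and the summation bookkeeping --- are routine.
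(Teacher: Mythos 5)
Your overall device---an explicit barrier pushed through the discrete comparison principle---is the same as the paper's, but your coordinate-by-coordinate telescoping leaves a genuine gap exactly where you flag it, and as written the invocation of Lemma \ref{lem:discrete-comp} is not valid. That lemma requires $S(h,\zeta,u)\le S(h,\zeta,\phi_k)$ at \emph{every} $\zeta\in(h\vb{1},z]$ with $z\geqq w_k$, whereas $\phi_k$ is constant in $\zeta_k$ on $\{\zeta_k\le w_{k-1,k}-h\}$, where $S(h,\zeta,\phi_k)=0$, and it also fails in a band just above the kink: for $t:=\zeta_k-w_{k-1,k}+h$ small compared with $h/d$ and the remaining coordinates large (e.g.\ $d=2$, $\zeta=(w_{k-1,1}-h+t,A)$, $t<h/2$, $A$ large) the product of backward differences is of order $dM^dth^{d-1}<h^dM^d$. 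Of your two proposed repairs, the second (enlarging the shift to $c_dh$) does not work: the flat region persists whenever $w_{k-1,k}\gg h$, so no dimensional constant makes $\phi_k$ a supersolution on all of $(h\vb{1},w_k]$. The first repair can be made to work---compare on the box $[a,w_k]$ with $a_k=x_k-h$ and $a_j=0$ for $j\ne k$: in its interior $\zeta_k>x_k$, so both arguments of the $k$-th backward difference lie in the smooth concave regime; on the $k$-th boundary slab every point is $\leqq w_{k-1}$, so Pareto-monotonicity gives $u\le u(w_{k-1})\le\phi_k$; and the other slabs lie in $\Gamma_h$ where $u=0$---but this requires a translated-box version of Lemma \ref{lem:discrete-comp} (boundary slab $\{\zeta\in[a,z]:\zeta_i\le a_i+h\text{ for some }i\}$), which is not the stated lemma, though its sliding-plane proof adapts verbatim. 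Until one of these is carried out, the key increment bound $u(w_k)-u(w_{k-1})\le dMR^{\frac{d-1}{d}}(y_k-x_k+h)^{\frac1d}$ is unproven.

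The idea that lets the paper bypass this entirely is truncation: it sets $\hat u=u\circ\pi_{x_0}$ and compares $u$ with the single function $w=\hat u+M\sum_{i=1}^d\psi(\cdot-b^i)$, $b^i=(x_{0,i}-h)e_i$, on the original domain with boundary $\Gamma_h$. On $(h\vb{1},x_0]$---precisely where your barriers degenerate---the supersolution property comes for free from $\hat u=u$ there, not from the barriers, while off $(h\vb{1},x_0]$ some coordinate exceeds $x_{0,k}$ and the $k$-th barrier is in its smooth regime; one application of Lemma \ref{lem:discrete-comp} then gives the ordered-case estimate with the same constant you obtain. Two smaller points: your reduction to grid points uses $u=u(\lfloor\cdot\rfloor_h)$, which is not a hypothesis of the lemma (and is unnecessary, since nothing in the barrier argument needs grid alignment); and your reduction via the componentwise maximum is fine---it is the mirror image of the paper's reduction via $\pi_{x_0}(y_0)=\pi_{y_0}(x_0)$ and yields the same factor of $2$.
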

\begin{proof}
Let $R>0$ and $x_0,y_0 \in (h,R]^d$.  We first deal with the case where $x_0 \leqq y_0$. Set $\hat{u}(x) = u(\pi_{x_0}(x))$ and define $\psi:\R^d\to \R$ by
\begin{equation}\label{eq:psi-proofs}
\psi(x) = \begin{cases}
d(x_1\cdots x_d)^\frac{1}{d}& \text{if } x \in (\vb{0},\infty),\\
0& \text{otherwise.} \end{cases}
\end{equation}
By the concavity of $t \mapsto t^\frac{1}{d}$ we have
\[\psi(x) - \psi(x-h e_i) = d(x_1\cdots x_d)^\frac{1}{d} x_i^{-\frac{1}{d}} (x_i^\frac{1}{d} - (x_i-h)^\frac{1}{d}) {}\geq{} x_i^{-1} (x_1\cdots x_d)^\frac{1}{d} h,\]
for any $x \in (h\vb{1},\infty)$ and hence 
\begin{equation}\label{eq:discrete-super}
S(h,x,\psi) \geq 1 \ \text{ for all } \ x \in (h\vb{1},\infty).
\end{equation}
By the translation invariance of $S$ and \eqref{eq:discrete-super} we have 
\begin{equation}\label{eq:h-super}
S(h,x,\psi(\cdot - b)) \geq 1 \ \text{ for all } b \in [\vb{0},\infty), \  x \in (b + h\vb{1},\infty).
\end{equation}
Set $b^i = (x_{0,i} - h)e_i \in \R^d$.  For $x \in [\vb{0},\infty)$ set
\[w(x) = \hat{u}(x) + \|S(h,\cdot,u)\|^\frac{1}{d}_{L^\infty((h,R]^d)}\sum_{i=1}^d \psi(x-b^i),\]
and note that $w$ is Pareto-monotone.  Let $x \in (h\vb{1},\infty)\setminus(h\vb{1},x_0]$.  Then for some $k$ we have $x_k > x_{0,k}$, and hence $x > b^k + h\vb{1}$.  We therefore have
\begin{align*}
S(h,x,w) &{}\geq{} \frac{1}{h^d} \prod_{i=1}^d \Bigg( \hat{u}(x) - \hat{u}(x-h e_i) \\
& \hspace{2cm}+ \|S(h,\cdot,u)\|^\frac{1}{d}_{L^\infty((h,R]^d)} (\psi(x-b^k) - \psi(x-b^k - h e_i))\Bigg) \\
&{}\geq{}S(h,x,\hat{u}) +  \|S(h,\cdot,u)\|_{L^\infty((h,R]^d)} S(h,x,\psi(\cdot - b^k))\\
&{}\hspace{-2.4mm}\stackrel{\eqref{eq:h-super}}{\geq}{} \hspace{-2.4mm}S(h,x,\hat{u}) +  \|S(h,\cdot,u)\|_{L^\infty((h,R]^d)} \\
&{}\geq{} S(h,x,u).
\end{align*}
Suppose now that $x \in (h\vb{1},x_0]$.  Then since $u=\hat{u}$ on $[\vb{0},x_0]$ we have $S(h,x,\hat{u}) = S(h,x,u)$ and hence $S(h,x,w) \geq S(h,x,u)$.  Since $w \geq u=0$ on $\Gamma_h\cap[0,R]^d$, we can apply Lemma \ref{lem:discrete-comp} to obtain $w \geq u$ on $[0,R]^d$, which yields
\begin{align}\label{eq:case1}
u(y_0) - \hat{u}(y_0) &{}\leq{} \|S(h,\cdot,u)\|_{L^\infty((h,R]^d)}^\frac{1}{d} \sum_{i=1}^d \psi(y_0 - b^i) \notag \\
&{}\leq{} dR^\frac{d-1}{d}\|S(h,\cdot,u)\|_{L^\infty((h,R]^d)}^\frac{1}{d} \sum_{i=1}^d (y_{0,i} - x_{0,i} + h)^\frac{1}{d}\notag \\
&{}\leq{} d^2R^\frac{d-1}{d}\|S(h,\cdot,u)\|_{L^\infty((h,R]^d)}^\frac{1}{d} (|x_0 - y_0|^\frac{1}{d} +  h^\frac{1}{d}).
\end{align}
Noting that $\pi_{x_0}(y_0) = x_0$ we have $\hat{u}(y_0) = u(\pi_{x_0}(y_0)) = u(x_0)$, which completes the proof for the case that $x_0 \leqq y_0$.  

Suppose now that $x_0,y_0 \in (h,R]^d$ such that $x_0 \not\leqq y_0$.  Set 
\[x = \pi_{x_0}(y_0) = \pi_{y_0}(x_0).\]
Then $|x_0-x| \leq |x_0 - y_0|$, $|y_0 - x| \leq |x_0 - y_0|$, $x \leqq x_0$, and $x \leqq y_0$.  It follows that
\begin{align*}
|u(x_0)-u(y_0)| &{}\leq{} |u(x_0) - u(x)| + |u(y_0) - u(x)| \\
&{}\leq{} 2d^2R^\frac{d-1}{d}\|S(h,\cdot,u)\|_{L^\infty((h,R]^d)}^\frac{1}{d}( |x_0 - y_0|^\frac{1}{d} + h^\frac{1}{d}),
\end{align*}
which completes the proof.
\end{proof}

\subsection{Convergence of numerical scheme}
\label{sec:convergence-proof}

Our main result is the following convergence statement for the scheme (S).
\begin{theorem}\label{thm:conv}
Let $f$ be nonnegative and satisfy (H).  Let $U$ be the unique Pareto-monotone viscosity solution of (P) satisfying \eqref{eq:add-bound}.  For every $h>0$ let $U_h:[\vb{0},\infty) \to \R$ be the unique Pareto-monotone solution of (S).  Then $U_h \to U$ uniformly on $[\vb{0},\infty)$ as $h\to 0$.
\end{theorem}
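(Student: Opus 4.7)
The strategy is to follow the Barles--Souganidis template for convergence of monotone schemes---checking stability, monotonicity, and consistency---and then to use the approximate H\"older estimate of Lemma \ref{lem:holder} to upgrade the usual half-relaxed-limits argument to a genuine subsequential uniform convergence statement. The uniqueness of the continuous Pareto-monotone viscosity solution of (P) established in \cite{calder2013} will then identify the limit and close the argument.

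Stability and compactness follow from tools already assembled. Since $f$ is bounded under (H) and $S(h,\cdot,U_h) = f(\lfloor\cdot\rfloor_h)$, Lemma \ref{lem:holder} anchored at a point just above $h\vb{1}$ (where $\lfloor\cdot\rfloor_h = h\vb{1}\in\Gamma_h$ so that $U_h$ vanishes) yields both a uniform bound $\|U_h\|_{L^\infty([\vb{0},R]^d)}\leq C_R$ and a uniform modulus $|U_h(x)-U_h(y)|\leq C_R(|x-y|^{1/d}+h^{1/d})$ on each cube $[\vb{0},R]^d$. Because $\supp(f)\subset\bar{\Omega}\subset(0,1)^d$, there exists $z_h$ with $z_h\to\vb{1}$ such that the support of $x\mapsto S(h,x,U_h)$ lies in $[\vb{0},z_h]$; Proposition \ref{prop:boundary_condition} then delivers $U_h=U_h\circ\pi_{z_h}$, promoting the previous local bound to a global uniform $L^\infty$ bound on $[\vb{0},\infty)$. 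Combined with the modulus of continuity, a diagonal variant of the Arzel\`a--Ascoli theorem (worked out in Appendix A) extracts a subsequence $h_k\to 0$ with $U_{h_k}\to V$ uniformly on $[\vb{0},\infty)$, where $V$ is Pareto-monotone, H\"older continuous, vanishes on $\partial\R^d_+$, and satisfies $V=V\circ\pi_\vb{1}$ in the limit.

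The main obstacle is verifying that $V$ is a viscosity solution of (P), since $f$ may be discontinuous across $\partial\Omega$. The Barles--Souganidis step proceeds as usual: for $\phi\in C^1(\R^d_+)$ with $V-\phi$ attaining a strict local maximum at $x_0\in\R^d_+$, uniform convergence furnishes maximizers $x_{h_k}\to x_0$ of $U_{h_k}-\phi$; the monotonicity of $S$ in its last argument (already exploited in the proof of Lemma \ref{lem:discrete-comp}) yields $f(\lfloor x_{h_k}\rfloor_{h_k})=S(h_k,x_{h_k},U_{h_k})\geq S(h_k,x_{h_k},\phi)$, and consistency gives the subsolution inequality $\phi_{x_1}(x_0)\cdots\phi_{x_d}(x_0)\leq f^*(x_0)$, with the symmetric test yielding the supersolution inequality involving $f_*$. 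Because the H\"older estimate has already provided a \emph{continuous} limit $V$, we may invoke the comparison principle for continuous viscosity sub- and supersolutions of (P) from \cite[Theorem 4]{calder2013}---the very step that strong uniqueness would otherwise be required for---to conclude $V=U$. Since every subsequence of $\{U_h\}$ admits a further subsequence converging to $U$, the full family converges uniformly to $U$ on $[\vb{0},\infty)$ as $h\to 0$.
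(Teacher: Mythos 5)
Your proposal is correct and follows essentially the same route as the paper: approximate H\"older regularity (Lemma \ref{lem:holder}) plus Proposition \ref{prop:boundary_condition} give global stability and the boundary condition \eqref{eq:add-bound}, the modified Arzel\`a--Ascoli theorem of the Appendix extracts a uniformly convergent subsequence, the Barles--Souganidis monotonicity/consistency argument shows the limit is a (Pareto-monotone, continuous) viscosity solution, and the comparison/uniqueness results of \cite{calder2013} identify it with $U$, after which the subsequence trick yields full convergence. The only differences are cosmetic: you spell out the half-relaxed-limit step with $f^*$ and $f_*$, and you invoke the comparison principle (\cite[Theorem 4]{calder2013}) where the paper cites the uniqueness statement (\cite[Theorem 5]{calder2013}).
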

\begin{proof}
By (H) we have that $f(x) = 0$ for $x\not\in(0,1)^d$, and hence $\supp(f(\lfloor \cdot \rfloor_h)) \subset [0,1]^d$.  Therefore, by Proposition \ref{prop:boundary_condition}, we have that $U_h$ satisfies \eqref{eq:add-bound}. Combining this with Lemma \ref{lem:holder}  we have
\begin{equation}\label{eq:stability}
\|U_h\|_{L^\infty([\vb{0},\infty))} \leq C\|f\|_{L^\infty([\vb{0},\infty))}^\frac{1}{d},
\end{equation}
for all $h>0$.  Similarly, combining \eqref{eq:add-bound} with Lemma \ref{lem:holder} we have
\begin{equation}\label{eq:holder}
|U_h(x) - U_h(y)|\leq 2d^2\|f\|_{L^\infty([\vb{0},\infty))}^\frac{1}{d}(|x-y|^\frac{1}{d} + h^\frac{1}{d}) \ \ \text{for all } x,y \in [\vb{0},\infty),
\end{equation}
 for every $h>0$.
The estimates in \eqref{eq:stability} and \eqref{eq:holder} show uniform boundedness, and a type of equicontinuity, respectively, for the sequence $U_h$.  By an argument similar to the proof of the Arzel\`a-Ascoli Theorem (see the Appendix), there exists a subsequence $h_k \to 0$ and $u \in C^{0,\frac{1}{d}}([\vb{0},\infty))$ such that $U_{h_k} \to u$ uniformly on compact sets in $[\vb{0},\infty)$.  By \eqref{eq:add-bound}, we actually have $U_{h_k} \to u$ uniformly on $[\vb{0},\infty)$.   Since the scheme (S) is monotone and consistent, it is a standard result that $u$ is a viscosity solution of (P)~\cite{barles1991}.  Note that $U_h$ is Pareto-monotone, $U_h=0$ on $\Gamma_h$, and $U_h$ satisfies \eqref{eq:add-bound}.  Since $U_{h_k} \to u$ uniformly, it follows that $u$ is Pareto-monotone, $u=0$ on $\partial \R^d_+$, and $u$ satisfies \eqref{eq:add-bound}. 
By uniqueness for (P)~\cite[Theorem 5]{calder2013} we have $u=U$.    Since we can apply the same argument to any subsequence of $U_h$, it follows that $U_h \to U$ uniformly on $[\vb{0},\infty)$.
\end{proof}

In Section \ref{sec:num}, we observe that the numerical scheme provides a fairly consistent underestimate of the exact solution of (P).  The following lemma shows that this is indeed the case whenever the solution $U$ of (P) is concave.
\begin{lemma}\label{lem:conv-below}
Let $f$ be nonnegative and satisfy (H).  Let $U$ be the unique Pareto-monotone viscosity solution of (P) satisfying \eqref{eq:add-bound}.  For every $h>0$ let $U_h:[\vb{0},\infty) \to \R$ be the unique Pareto-monotone solution of (S).  If $U$ is concave on $[\vb{0},\infty)$ then $U_h \leq U$ for every $h>0$.
\end{lemma}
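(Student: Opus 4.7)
Proof plan. The plan is to apply the discrete comparison principle (Lemma~\ref{lem:discrete-comp}) with $u = U_h$ and $v(x) := U(\lfloor x\rfloor_h)$, the piecewise-constant extension of $U$ to the continuous domain. Since $U$ is Pareto-monotone, $v$ inherits Pareto-monotonicity from $U$ and the floor map, and $U_h = 0 \le v$ on $\Gamma_h$ because $U(\vb{0}) = 0$. Using the elementary identity $\lfloor x - he_i\rfloor_h = \lfloor x\rfloor_h - he_i$ one finds
\[
S(h,x,v) \;=\; \prod_{i=1}^d \frac{U(\lfloor x\rfloor_h) - U(\lfloor x\rfloor_h - he_i)}{h}, \qquad S(h,x,U_h) \;=\; f(\lfloor x\rfloor_h),
\]
so that the hypothesis $S(h,x,U_h) \le S(h,x,v)$ of Lemma~\ref{lem:discrete-comp} reduces to the grid-point bound
\begin{equation}\label{eq:plangoal}
\prod_{i=1}^d \bigl(U(y) - U(y - he_i)\bigr) \;\geq\; h^d f(y) \qquad \text{for every } y \in h\Z^d \cap (h\vb{1},\infty).
\end{equation}

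The core estimate \eqref{eq:plangoal} is extracted from concavity of $U$ together with the PDE (P). Assume first that $U$ is differentiable at $y$. Since $U$ is concave, the supergradient inequality gives the one-sided Taylor bound $U(y - he_i) \le U(y) - h\,U_{x_i}(y)$ for each $i$, and Pareto-monotonicity ensures $U_{x_i}(y)\ge 0$, so that $\tfrac{1}{h}(U(y) - U(y-he_i)) \ge U_{x_i}(y)\ge 0$. Taking the product over $i$ and invoking $\prod_i U_{x_i}(y) = f(y)$, which holds classically at any point of differentiability of the viscosity solution, yields \eqref{eq:plangoal} at such $y$. Because $U$ is concave it is differentiable almost everywhere, and the map $y\mapsto \prod_i(U(y)-U(y-he_i))$ is continuous in $y$ (as $U$ itself is). Under hypothesis~(H) $f$ is continuous off the $\R^d$-negligible set $\partial\Omega$, so \eqref{eq:plangoal} propagates to every grid point $y\notin\partial\Omega$ by taking a limit along nearby differentiability points.

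Grid points lying on $\partial\Omega$, where $f$ may jump, require a bit more care; I would handle them by approximation. Letting $\rho_\eta$ be a smooth nonnegative kernel with support in $[0,\eta]^d$ and setting $U_\eta := U * \rho_\eta$, one obtains a smooth concave function with $U_\eta \to U$ uniformly on compacta. The smooth-case version of the argument above applied to $U_\eta$ gives
\[
\prod_{i=1}^d\bigl(U_\eta(y) - U_\eta(y - he_i)\bigr) \;\ge\; h^d \prod_{i=1}^d (U_\eta)_{x_i}(y),
\]
and the one-sided support of $\rho_\eta$, combined with the a.e. identity $\prod_i U_{x_i} = f$ and a dominated-convergence argument, controls $\prod_i (U_\eta)_{x_i}(y)$ from below by $f_\eta(y) := (f*\rho_\eta)(y)$. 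Sending $\eta\to 0$ recovers \eqref{eq:plangoal} at the exceptional grid points. Lemma~\ref{lem:discrete-comp} applied on $[\vb{0},z]$ for every large $z$ then yields $U_h(x) \le U(\lfloor x\rfloor_h) \le U(x)$, the final inequality being Pareto-monotonicity of $U$; the bound propagates to all of $[\vb{0},\infty)$ via \eqref{eq:add-bound} for both $U_h$ and $U$.

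The main obstacle I expect is the discrete supersolution bound \eqref{eq:plangoal} at points where $U$ is not differentiable or $f$ is discontinuous: the classical identity $\prod_i U_{x_i}(y) = f(y)$ is not available there, and for concave viscosity solutions only the subsolution bound $\prod_i p_i \le f(y)$ holds automatically for $p$ in the superdifferential, which goes the wrong way. Concavity of $U$ is precisely what allows the discrete forward differences to dominate the relevant partial derivatives, and the mollification step is what absorbs the discontinuities of $f$ so that \eqref{eq:plangoal} survives pointwise on the grid.
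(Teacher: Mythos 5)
Your main line is exactly the paper's proof: concavity gives $U(x)-U(x-he_i)\ge h\,U_{x_i}(x)$ at points of differentiability, at such points where $f$ is \emph{also continuous} the viscosity solution satisfies $U_{x_1}\cdots U_{x_d}=f$ classically (your phrase ``at any point of differentiability'' needs this continuity caveat, but you use it only where it holds), and comparing $U_h$ with $W_h:=U(\lfloor\cdot\rfloor_h)$ through Lemma~\ref{lem:discrete-comp} yields $U_h\le W_h\le U$, extended to all of $[\vb{0},\infty)$ via \eqref{eq:add-bound}. Up to and including the propagation of the discrete supersolution inequality to grid points $y\notin\partial\Omega$ by continuity of $y\mapsto\prod_i\bigl(U(y)-U(y-he_i)\bigr)$, your argument is correct and coincides with the paper's.

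The gap is the mollification step for grid points on $\partial\Omega$. There you need $\prod_{i=1}^d (U_{x_i}*\rho_\eta)(y)\ \ge\ \bigl((U_{x_1}\cdots U_{x_d})*\rho_\eta\bigr)(y)$, i.e.\ a product of averages dominating the average of the product. This is not implied by concavity or by the one-sided kernel, and it is false in general: if, say, $U=\phi(x_1+\cdots+x_d)$ with $\phi$ concave, increasing, and $\phi'$ nonconstant, then all $U_{x_i}$ coincide and Jensen's inequality gives $\int (\phi')^d\rho_\eta \ge \bigl(\int \phi'\rho_\eta\bigr)^d$, strictly the reverse of what you assert. The detour is also unnecessary: the continuity argument you already use off $\partial\Omega$ works verbatim at every $x\in(h\vb{1},\infty)$ and gives $S(h,x,U)\ge f_*(x)$, by passing to the limit along nearby points where $U$ is differentiable and $f$ is continuous (a set of full measure under (H)); alternatively, take $p$ to be any limit of gradients $\nabla U(x_k)$ at such nearby points---then $p$ lies in the superdifferential of the concave function $U$ at $y$, so $U(y-he_i)\le U(y)-hp_i$ for all $i$, while $\prod_i p_i=\lim_k f(x_k)\ge f_*(y)$. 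Note, however, that either repair only produces $f_*(y)$: to get the scheme inequality with $f(y)$ itself at a grid point of $\partial\Omega$ where $f$ exceeds its lower semicontinuous envelope, one must identify $f_*$ with $f$ there, which is precisely what the paper's proof does (tacitly) when it writes $S(h,x,U)\ge f_*(x)=f(x)$.
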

\begin{proof}
Fix $h>0$.  Since $U$ is concave, it is differentiable almost everywhere.\footnote{The fact that $U$ is Pareto-monotone also implies differentiability almost everywhere.}  Let $x \in (h\vb{1},\infty)$ be a point at which $U$ is differentiable and $f$ is continuous. Since $U$ is concave we have
\[U(x) - U(x-he_i) \geq hU_{x_i}(x) \ \ \text{for all } i.\]
Since $U$ is a viscosity solution of (P) and $f$ is continuous at $x$ we have
\[S(h,x,U) \geq U_{x_1}(x) \cdots U_{x_d}(x) =  f(x). \]
Since $x\mapsto S(h,x,U)$ is continuous, we see that $S(h,x,U) \geq f_*(x)=f(x)$ for all $x \in (h\vb{1},\infty]$.  Now define $W_h(x) = U(\lfloor x\rfloor_h)$.  Then we have
\[S(h,x,W_h) \geq f(\lfloor x\rfloor_h) \ \ \text{for all } x \in (h\vb{1},x],\]
and $W_h = 0$ on $\Gamma_h$. It follows from Lemma \ref{lem:discrete-comp} that $U_h \leq W_h$.  Since $U$ is Pareto-monotone, we have $W_h \leq U$, which completes the proof.
\end{proof}

\section{Numerical Results}
\label{sec:num}

We now present some numerical results using the scheme (S) to approximate the viscosity solution of (P).  We consider four special cases where the exact solution of (P) can be expressed in analytical form. Let $f_1(x) = 1$, $f_2(x) = \frac{2^d}{\pi^{d/2}} e^{-|x|^2}$,  
\begin{align*}
f_3(x) &{}={} 1-\chi_{[0,1/2]^d}(x) \ \ \  \text{and} \ \ \ f_4(x) {}={} \left(\sum_{i=1}^d x_i^9\right)^{1-d} \prod_{i=1}^d \left( 9 x_i^9 + \sum_{i=1}^d x_i^9\right).
\end{align*}
Here, $\chi_A$ denotes the characteristic function of the set $A$.
The corresponding solutions of (P) are $U_1(x) = d(x_1\cdots x_d)^{\frac{1}{d}}$, $U_2(x) = d\left(\prod_{i=1}^d {\rm erf} \,(x_i)\right)^\frac{1}{d}$,
and
\begin{align*}
U_3(x) &{}={} d\max_{i\in \{1,\dots,d\}} \left\{ \left(x_i - \frac{1}{2}\right)_+\prod_{j \neq i} x_j \right\}^\frac{1}{d}, \ \ \ U_4(x) {}={} d\left(\prod_{i=1}^d x_i \cdot \sum_{i=1}^d x_i^9\right)^\frac{1}{d},
\end{align*}
where ${\rm erf}\,(x)$ is the error function defined by ${\rm erf}\,(x) = 2/\sqrt{\pi} \int_0^x e^{-t^2} \, dt$, and $x_+ :=\max(0,x)$.
The solutions $U_1$ and $U_2$ are special cases of the formula 
\begin{equation}\label{eq:special-case}
U(x) = d \left(\int_{[\vb{0},x]} f(y) \, dy\right)^\frac{1}{d}, 
\end{equation}
which holds when $f$ is separable, i.e., $f(x)=f_1(x_1)\cdots f_d(x_d)$~\cite{calder2013}.
The solution $U_3$ can be obtained by the method of characteristics.  We chose to evaluate the proposed numerical scheme for $U_4$ because it has non-convex level sets, and then computed $f_4$ via (P).  In the probabilistic interpretation of (P) as the continuum limit of non-dominated sorting, non-convex Pareto fronts play an important role~\cite{ehrgott2005,calder2013}.

\begin{figure}[!t]
\centering
\subfigure[$U_1$]{\includegraphics[clip = true, trim = 20 20 20 20, width=0.45\textwidth]{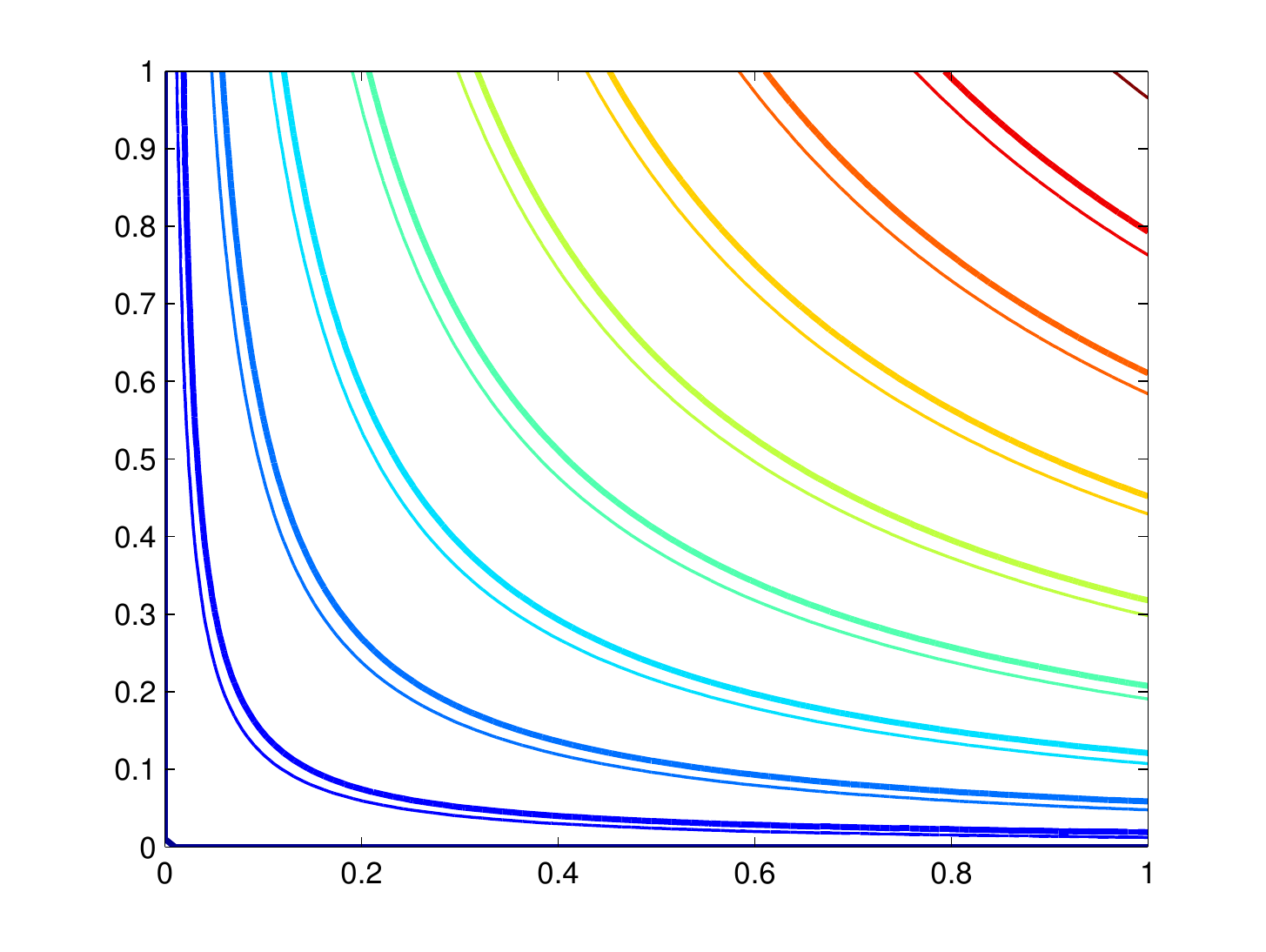}}
\subfigure[$U_2$]{\includegraphics[clip = true, trim = 20 20 20 20, width=0.45\textwidth]{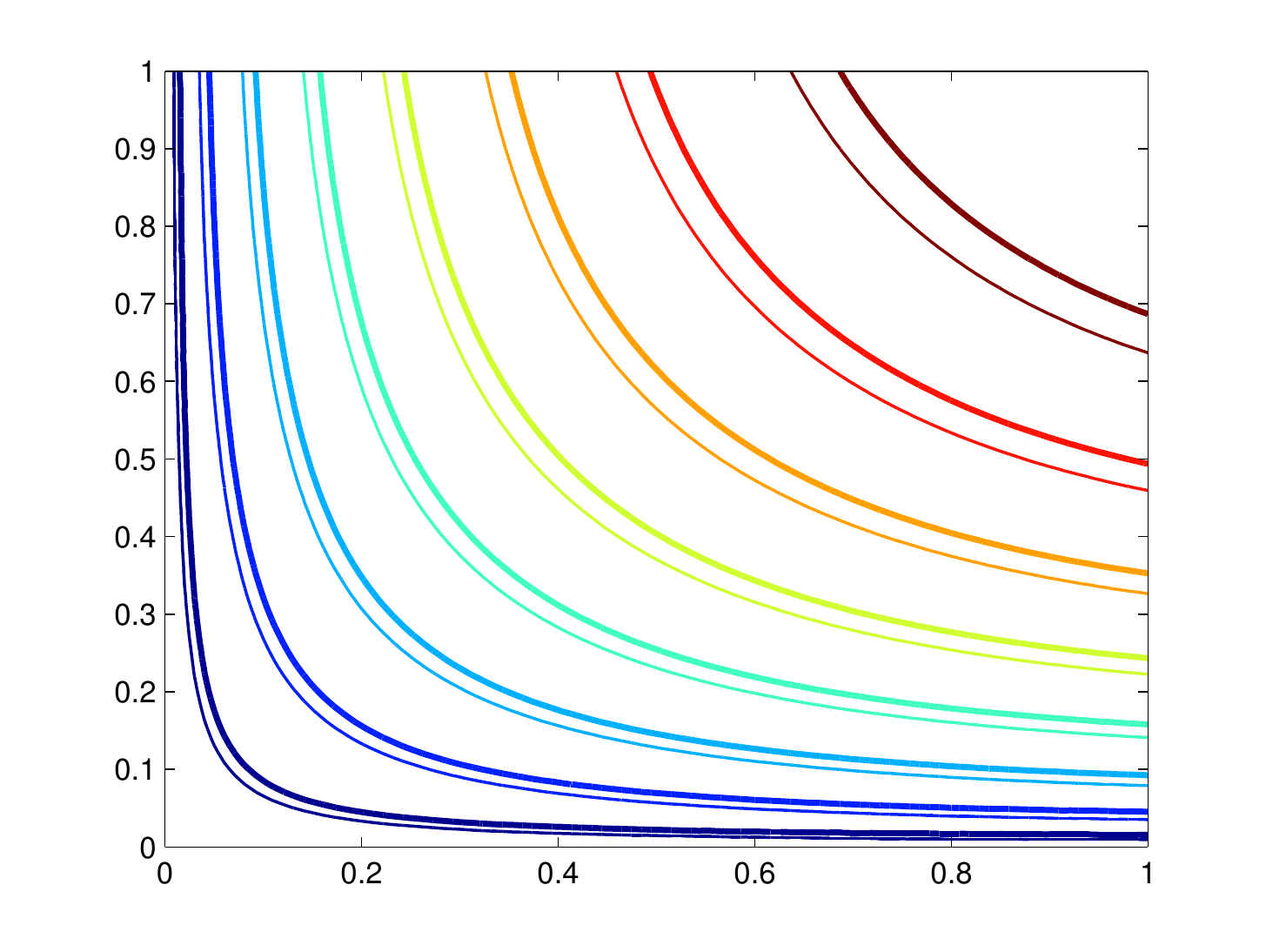}}
\subfigure[$U_3$]{\includegraphics[clip = true, trim = 20 20 20 20, width=0.45\textwidth]{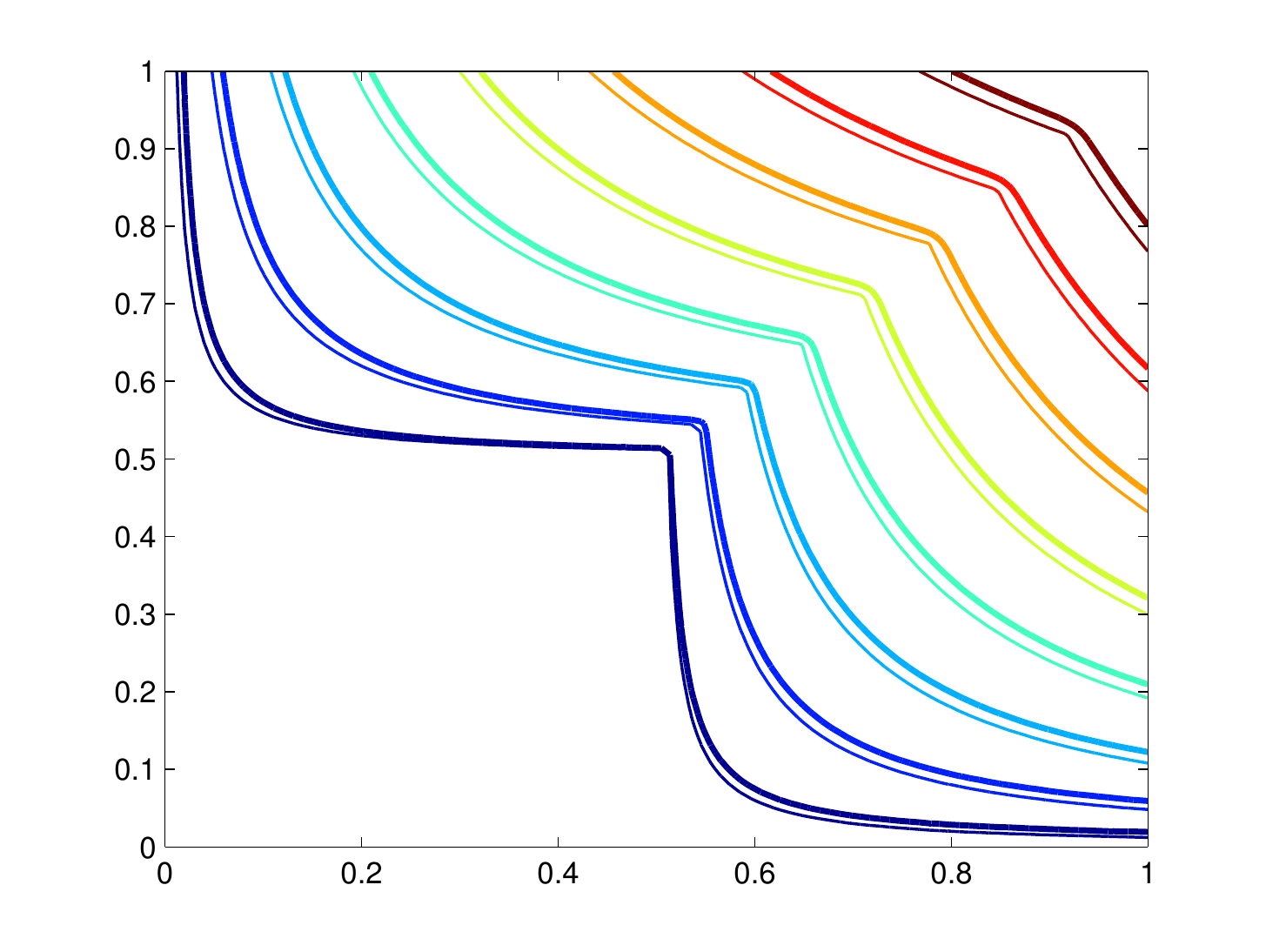}}
\subfigure[$U_4$]{\includegraphics[clip = true, trim = 20 20 20 20, width=0.45\textwidth]{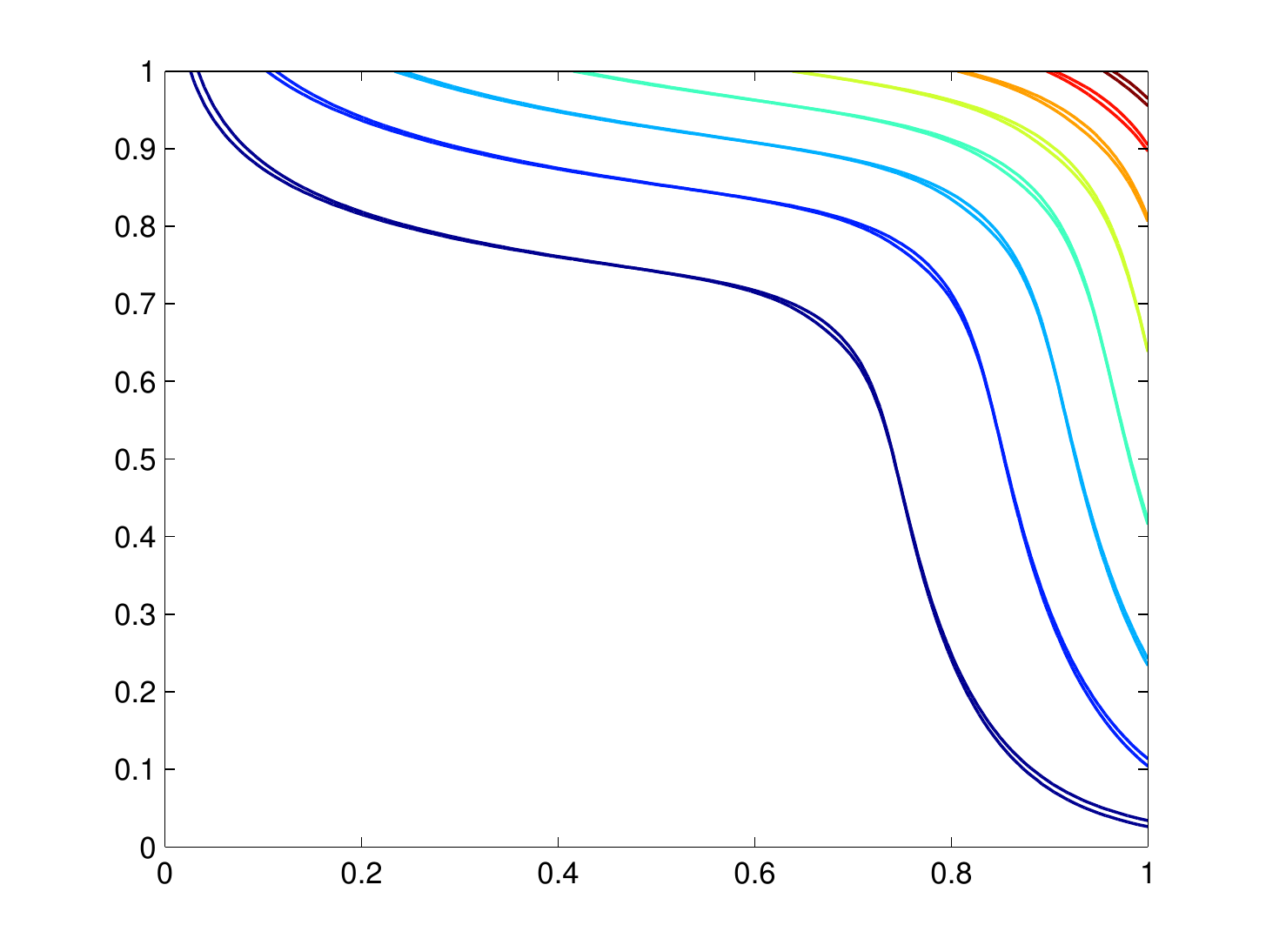}\label{fig:nonconvex-2D}}
\caption{Comparison of numerical solutions and exact solutions of (P) for $d=2$.  The thin and thick lines represent the level sets of the exact and numerical solutions, respectively.}
\label{fig:2D-sim}
\end{figure}

\begin{figure}[!t]
\centering
\subfigure[$U_1$]{\includegraphics[clip = true, trim = 30 25 30 25, width=0.45\textwidth]{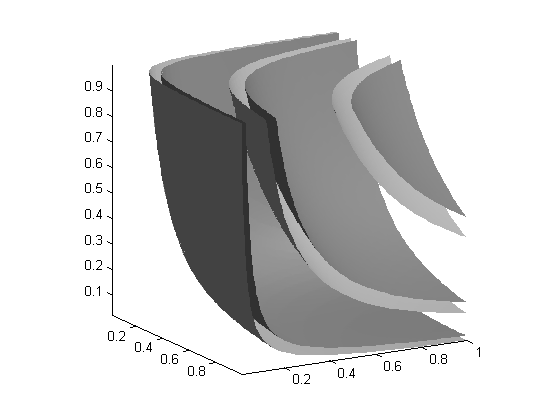}}
\subfigure[$U_2$]{\includegraphics[clip = true, trim = 30 25 30 25, width=0.45\textwidth]{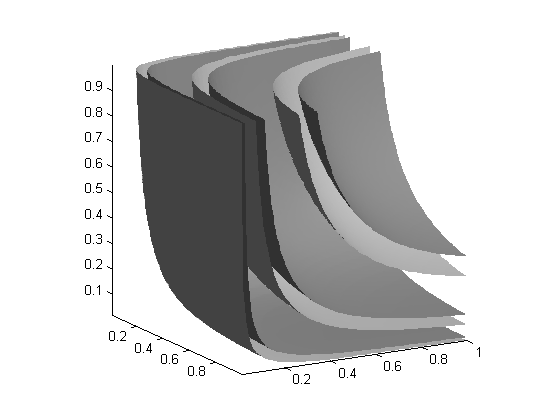}}
\subfigure[$U_3$]{\includegraphics[clip = true, trim = 30 25 30 25, width=0.45\textwidth]{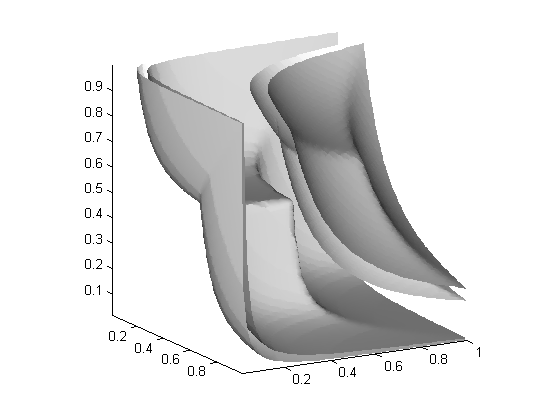}}
\subfigure[$U_4$]{\includegraphics[clip = true, trim = 30 25 30 25, width=0.45\textwidth]{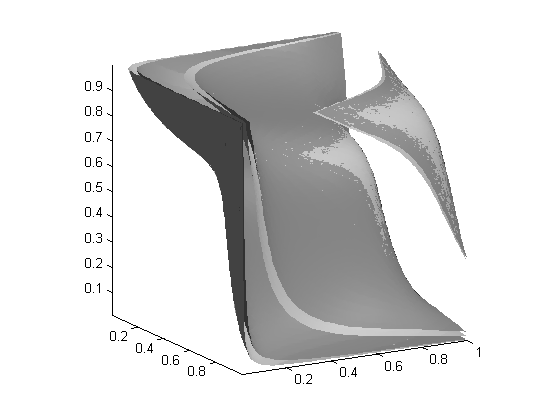}}
\caption{Comparison of numerical solutions and exact solutions of (P) for $d=3$.  The light and dark surfaces represent the level sets of the exact and numerical solutions, respectively. }
\label{fig:3D-sim}
\end{figure}

We computed the numerical solutions for $d=2$ and $d=3$.  For $d=2$ we used a $100\times 100$ grid, and for $d=3$, we used a $50\times 50\times 50$ grid and solved the scheme at each grid point via a binary search with precision $\eps=10^{-4}$.   Figures \ref{fig:2D-sim} and \ref{fig:3D-sim} compare the level sets of the exact solutions to those of the numerical solutions for $d=2$ and $d=3$, respectively.  In Figure \ref{fig:2D-sim}, the thin lines correspond to the exact solution while the thick lines correspond to the numerical solutions, with the exception of \ref{fig:nonconvex-2D} where both are thin lines for increased visibility.  In Figure \ref{fig:3D-sim}, the darker surfaces correspond to the numerical solution while the lighter surfaces represent the exact solution.  For both $d=2$ and $d=3$, we can see that the level sets of the numerical solutions consistently overestimate the true solution, indicating that the numerical solutions are converging from below to the exact solutions.  We proved in Lemma \ref{lem:conv-below} that $U_h\nnearrow U$ whenever $U$ is concave, so this observation is to be expected. Note however, that $U_3$ is not convex, yet the overestimation is still present, indicating that Lemma \ref{lem:conv-below} may hold under more general hypotheses on $U$.  We also observe that $U_3$ has a shock, which is resolved reasonably well for $d=2$ and $d=3$, given the grid sizes used.  

\subsection{Rate of convergence}
\label{sec:pde-rates}

We show here the results of some numerical experiments concerning the rate of convergence of $U_h \to U$ and $ n^{-\frac{1}{d}}u_n \to c_dd^{-1} U$.  Figure \ref{fig:pde-rates} shows $\|U_h - U\|_{L^1([0,1]^2)}$ and $\|U_h - U\|_{L^\infty(\R^d_+)}$ versus $h$ for the density $f_3(x) = 1-\chi_{[0,1/2]^d}(x)$ from the beginning of Section \ref{sec:num}.    Both norms appear to have convergence rates on the order of $O(h^\alpha)$, and a regression analysis yields $\alpha = 0.5006$ for the $L^\infty$ norm and $\alpha = 0.8787$ for the $L^1$ norm.  Thus, it is reasonable to suspect an $L^\infty$ convergence rate of the form
\begin{equation}\label{eq:pde-rates}
\|U_h - U\|_{L^\infty(\R^d_+)}  \leq C h^\frac{1}{d},
\end{equation}
for some constant $C>0$.  We intend to investigate this in a future work.  It is quite natural that the convergence rate for the $L^1$ norm is substantially better than the $L^\infty$ norm, due to the non-differentiability of $U_3$ at the boundary $\partial \R^2_+$.  This induces a large error near $\partial \R^2_+$ which has a more significant impact on the $L^\infty$ norm.  
\begin{figure}
\centering
\subfigure[Convergence rate of scheme (S)]{\includegraphics[clip=true,trim = 10 5 47 30, width=0.475\textwidth]{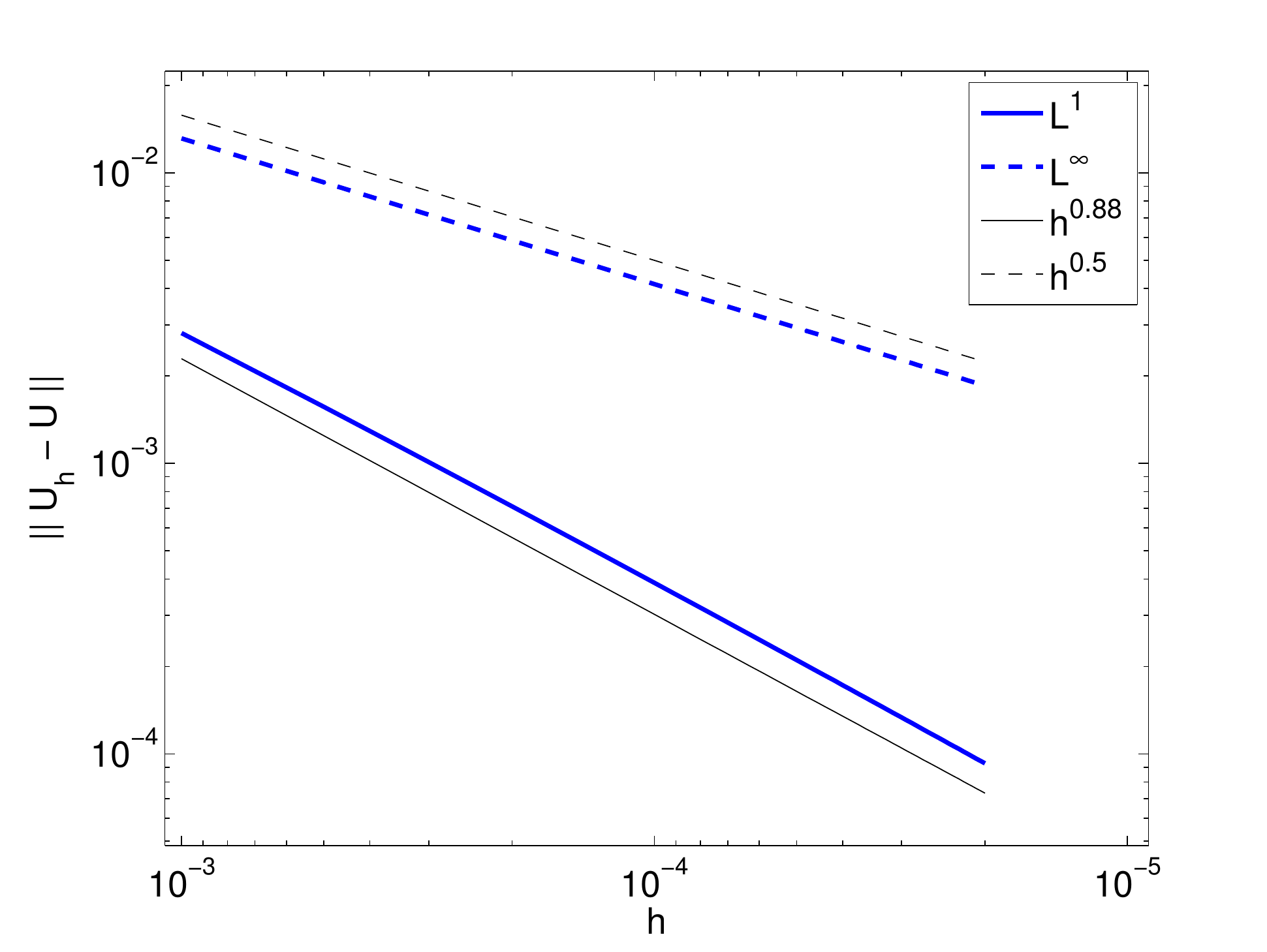}\label{fig:pde-rates}}
\subfigure[Stochastic convergence rates]{\includegraphics[clip=true,trim = 5 5 47 30, width=0.475\textwidth]{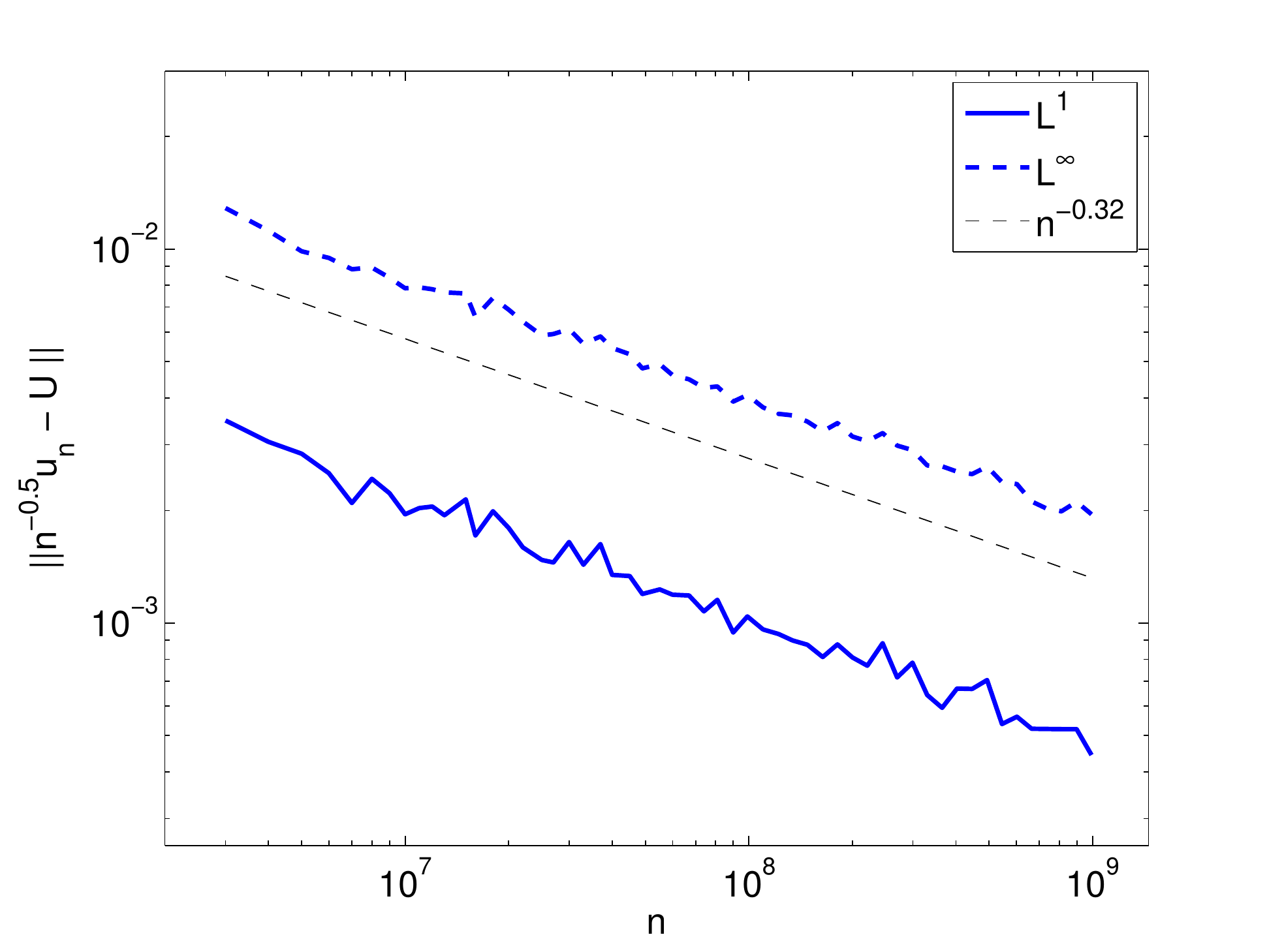}\label{fig:sto-rates}}
\caption{Convergence rates for (a) the scheme (S) as a function of the grid resolution $h$,  and (b) the stochastic convergence $n^{-\frac{1}{d}}u_n \to c_d d^{-1} U$ as a function of the number $n$ of random samples.}
\label{fig:rates}
\end{figure}
  
To measure the rate of convergence of $n^{-\frac{1}{d}}u_n \to c_d d^{-1} U$, we consider the following two norms
\begin{equation}\label{eq:max-norm}
|n^{-\frac{1}{d}}u_n - c_dd^{-1} U|_{L^\infty} := \max_{1 \leq i \leq n} |n^{-\frac{1}{d}}u_n(X_i) - c_dd^{-1} U(X_i)|
\end{equation}
and
\begin{equation}\label{eq:L1-norm}
|n^{-\frac{1}{d}}u_n - c_dd^{-1} U|_{L^1} := \frac{1}{n} \sum_{i=1}^n |n^{-\frac{1}{d}}u_n(X_i) - c_dd^{-1} U(X_i)|
\end{equation}
Figure \ref{fig:sto-rates} shows \eqref{eq:max-norm} and \eqref{eq:L1-norm} versus $n$ for the same density $f_3$.  For each $n$ the values of \eqref{eq:max-norm} and \eqref{eq:L1-norm} were computed by taking the average over $10$ independent realizations.  It appears that both norms decay on the order of $O(n^{-\alpha})$, and a regression analysis yields $\alpha =0.3281$ for the $L^1$ norm \eqref{eq:L1-norm} and $\alpha=0.3144$ for the $L^\infty$ norm \eqref{eq:max-norm}. These results are in line with the known convergence rates for the longest chain problem with a uniform distribution on $[0,1]^d$~\cite{bollobas1992}.
 
The results for the other densities $f_1,f_2,$ and $f_4$ are similar.  We demonstrated the convergence rates on $f_3$ due to the fact that it has many important features; namely, it is discontinuous, yields non-convex Pareto-fronts, and induces a shock in the viscosity solution $U_3$ of (P).

\section{Fast approximate non-dominated sorting}
\label{sec:fast}

We demonstrate now how the numerical scheme (S) can be used for fast approximate non-dominated sorting, and give a real-world application to anomaly detection in Section \ref{sec:real}.  We assume here that the given data $X_1,\dots,X_n$ are drawn \iid~from a reasonably smooth density function $f$, and that $n$ is large enough so that $n^{-\frac{1}{d}}u_n$ is well approximated by $c_dd^{-1} U$.   In this regime, it is reasonable to consider an approximate non-dominated sorting algorithm based on numerically solving (P).  A natural algorithm is as follows.   

Since the density $f$ is rarely known in practice, the first step is to form an estimate $\widehat{f}$ of $f$ using the samples $X_1,\dots,X_n$.  In the large sample regime, this can be done very accurately using, for example, a kernel density estimator~\cite{tsybakov2009} or a $k$-nearest neighbor estimator~\cite{loftsgaarden1965}.  To keep the algorithm as simple as possible, we opt for a simple histogram to estimate $f$, aligned with the same grid used for numerically solving (P).  When $n$ is large, the estimation of $f$ can be done with only a random subset of $X_1,\dots,X_n$ of cardinality $k\ll n$, which avoids considering all $n$ samples.  The second step is to use the numerical scheme (S) to solve (P) on a fixed grid of size $h$, using the estimated density $\widehat{f}$ on the right hand side of (P). This yields an estimate $\widehat{U}_h$ of $U$, and the final step is to evaluate $\widehat{U}_h$ at each sample $X_1,\dots,X_n$ to yield approximate Pareto ranks for each point.  The final evaluation step can be viewed as an interpolation; we know the values of $\widehat{U}_h$ on each grid point and wish to evaluate $\widehat{U}_h$ at an arbitrary point.  A simple linear interpolation is sufficient for this step.  However, in the spirit of utilizing the PDE (P), we solve the scheme (S) at each point $X_1,\dots,X_n$ using the values of $\widehat{U}_h$ at neighboring grid points, i.e., given $\widehat{U}_h(x-he_i)$ for all $i$, and $y \in [x-h\vb{1},x]$, we compute $\widehat{U}_h(y)$ by solving
\begin{equation}\label{eq:scheme-sub}
\prod_{i=1}^d (\widehat{U}_h(y) - \widehat{U}_h(y - h_ie_i)) = h_1\cdots h_d \widehat{f}(x),
\end{equation}
where $h_i = y_i - (x_i-h)$.  In \eqref{eq:scheme-sub} we compute $\widehat{U}_h(y-h_ie_i)$ by linear interpolation using adjacent grid points.  Figure \ref{fig:subgrid} illustrates the grid used for computing $\widehat{U}_h(y)$.  
\begin{figure}
\centering
\includegraphics[width=0.45\textwidth]{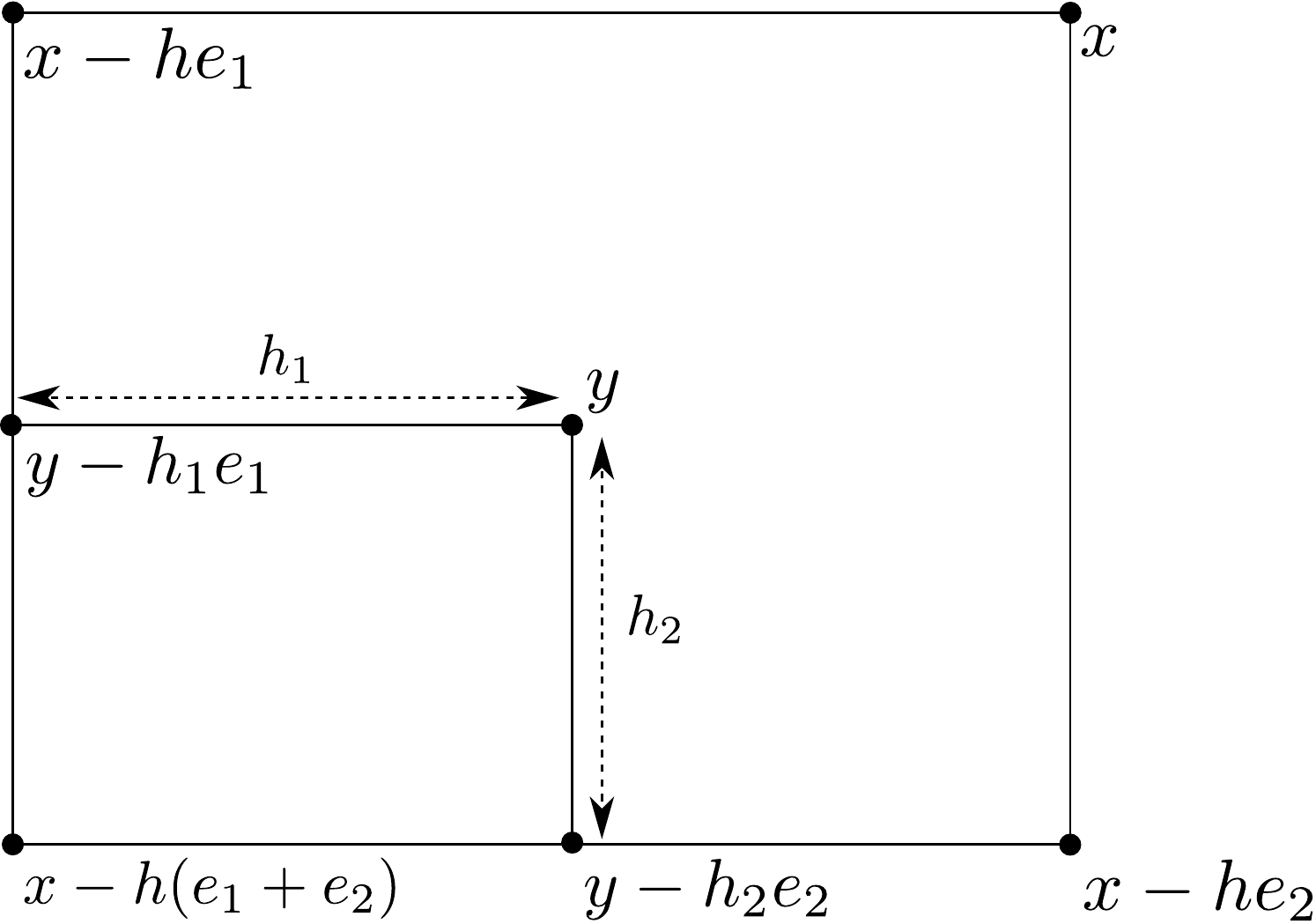}
\caption{Depiction of the grid used for computing $\widehat{U}_h(y)$ according to \eqref{eq:scheme-sub}.  The values of $\widehat{U}_h(y-h_1e_1)$ and $\widehat{U}_h(y-h_2e_2)$ are computed by linear interpolation using adjacent grid points, i.e., $\widehat{U}_h(y-h_1e_1)$ is computed via linearly interpolating between $\widehat{U}_h(x-he_1)$ and $\widehat{U}_h(x-h(e_1 + e_2))$.}
\label{fig:subgrid}
\end{figure}

The entire algorithm is summarized in Algorithm \ref{alg:ndom}. 
\begin{algorithm}\label{alg:ndom}
Fast approximate non-dominated sorting
\begin{enumerate}
\item Select $k$ points from $X_1,\dots,X_n$ at random.  Call them $Y_1,\dots,Y_k$.
\item Select a grid spacing $h$ for solving the PDE and estimate $f$ with a histogram aligned to the grid $h\N_0^d$, i.e.,
\begin{equation}\label{eq:fhat}
\widehat{f}_h(x) = \frac{1}{kh^d} \cdot \#\bigg\{Y_i \, : \, x \leqq Y_i \leqq x + h\vb{1}\bigg\} \ \text{for } x \in h\N_0^d.
\end{equation}
\item Compute the numerical solution $\widehat{U}_h$ on $h\N_0^d \cap [0,1]^d$ via (S).
\item Evaluate $\widehat{U}_h(X_i)$ for $i=1,\dots,n$ via interpolation. 
\end{enumerate}
\end{algorithm}
For simplicity of discussion, we have assumed that $X_1,\dots,X_n$ are drawn from $[0,1]^d$, but this is not essential as the scheme (S) can be easily adapted to any hypercube in $\R^d$, and this is in fact what we do in our implementation of Algorithm \ref{alg:ndom}. 

\subsection{Convergence rates in Algorithm \ref{alg:ndom}}
\label{sec:sublinear}

It is important to understand how the parameters $k$ and $h$ in Algorithm \ref{alg:ndom} affect the accuracy of the estimate $\hat{U}_h$.   We first consider the estimate $\hat{f}_h$.  By \eqref{eq:fhat}, we can write
\[h^d\hat{f}_h(x) = \frac{1}{k}\sum_{i=1}^k \chi_{[x,x+h\vb{1}]}(Y_i).\]
Hence $h^d\hat{f}_h(x)$ is the average of \iid~Bernoulli random variables with parameter
\begin{equation}\label{eq:p}
p = \int_{[x,x+h\vb{1}]} f(y) \, dy.
\end{equation}
By the central limit theorem, the fluctuations of $\hat{f}_h(x)$ about its mean satisfy
\begin{equation}\label{eq:rate1}
\left|\hat{f}_h(x) - \frac{p}{h^d}\right| \leq C\frac{1}{\sqrt{k}h^d},
\end{equation}
with high probability.

Let us suppose now that $f$ is globally Lipschitz.  The following can be easily modified for $f$ more or less regular, yielding similar results.  Then by \eqref{eq:p} we have
\[\left|f(x) - \frac{p}{h^d}\right| \leq C\sqrt{d}  h.\]
Combining this with \eqref{eq:rate1} we have
\begin{equation}\label{eq:rate3}
\|\hat{f}_h - f \|_{L^\infty([0,1]^d\cap h\N^d)} \leq C\left( \frac{1}{\sqrt{k}h^d} + h\right),
\end{equation}
with high probability.  By the discrete comparison principle (Lemma \ref{lem:discrete-comp}) and \eqref{eq:rate3} we have that
\begin{equation}\label{eq:rate4}
\|\hat{U}_h - U_h\|_{L^\infty([0,1]^d)} \leq d\|\hat{f}_h - f\|^\frac{1}{d}_{L^\infty([0,1]^d\cap h\N^d)} \leq C\left(k^{-\frac{1}{2d}}h^{-1} + h^\frac{1}{d}\right),
\end{equation}
with high probability.  Based on the numerical evidence presented in Section \ref{sec:pde-rates}, it is reasonable to suspect that $\|U - U_h\|_{L^\infty([0,1]^d)} \leq Ch^\frac{1}{d}$.  If this is indeed the case, then in light of \eqref{eq:rate4} we have
\begin{equation}\label{eq:final-rate}
\|\hat{U}_h - U\|_{L^\infty([0,1]^d)} \leq C\left(k^{-\frac{1}{2d}}h^{-1} + h^\frac{1}{d}\right),
\end{equation}
with high probability. 

The right side of the inequality \eqref{eq:final-rate} is composed of two competing additive terms. The first term $Ck^{-\frac{1}{2d}}h^{-1}$ captures the effect of random errors (variance) due to an insufficient number $k$ of samples. The second term $Ch^{\frac{1}{d}}$ captures the effect of non-random errors (bias) due to insufficient resolution $h$ of the proposed numerical scheme (S). This decomposition into random and non-random errors is analogous to the mean integrated squared error decomposition in the theory of non-parametric regression and image reconstruction~\cite{Korostelev1993}. Similarly to~\cite{Korostelev1993} we can use the bound in \eqref{eq:final-rate} to obtain rules of thumb on how to choose $k$ and $h$.  For example, we may first choose some value for $k$, and then choose $h$ so as to equate the two competing terms in \eqref{eq:final-rate}. This yields $h=k^{-\frac{1}{2(d+1)}}$ and \eqref{eq:final-rate} becomes
\begin{equation}\label{eq:final-rate2}
\|\hat{U}_h - U\|_{L^\infty([0,1]^d)} \leq Ck^{-\frac{1}{2d(d+1)}} = Ch^\frac{1}{d},
\end{equation}
with high probability.

Notice that Steps 1-3 in Algorithm \ref{alg:ndom}, i.e., computing $\hat{U}_h$, require $O(kh^{-d})$ operations.  If we choose the equalizing value $h=k^{-\frac{1}{2(d+1)}}$, then we find that computing $\hat{U}_h$ has complexity $O\left(k^\frac{3d+2}{2d+2}\right)$. Thus Algorithm \ref{alg:ndom} is sublinear in the following sense. Given $\eps>0$, we can choose $k$ large enough so that 
\[\|\hat{U}_h - U\|_{L^\infty([0,1]^d)} \leq \frac{\eps}{2c_d},\]
with high probability.  The $L^1$ sorting accuracy of using $\hat{U}_h$ in place of $u_n$ is then given by
\begin{align*}
\frac{1}{n} \sum_{i=1}^n |c_d \hat{U}_h(X_i) - dn^{-\frac{1}{d}} u_n(X_i)| &{}\leq{} \frac{1}{n} \sum_{i=1}^n \bigg(c_d|\hat{U}_h(X_i) - U(X_i)| \\
&\hspace{1in}+ |c_d U(X_i) - dn^{-\frac{1}{d}} u_n(X_i)|\bigg) \\
&{}\leq{} \frac{\eps}{2} + \frac{1}{n} \sum_{i=1}^n |c_d U(X_i) - dn^{-\frac{1}{d}} u_n(X_i)|,
\end{align*}
with high probability.   By the stochastic convergence $dn^{-\frac{1}{d}}u_n \to c_d U$, and the rates presented in Section \ref{sec:pde-rates}, there exists $N>0$ such that for all $n\geq N$ we have
\begin{equation}\label{eq:eps-approx}
\frac{1}{n} \sum_{i=1}^n |c_d \hat{U}_h(X_i) - dn^{-\frac{1}{d}} u_n(X_i)|  \leq \eps
\end{equation}
with high probability.
Thus, for any $\eps>0$ there exists $N,k$ and $h$ such that $\hat{U}_h$ is an $O(\eps)$ approximation of $u_n$ for all $n\geq N$, and $\hat{U}_h$ can be computed in constant time with respect to $n$.  We emphasize that the sublinear nature of the algorithm lies in the computation of $\hat{U}_h$.  Ranking all samples, i.e.,  evaluating $\hat{U}_h$ at each of $X_1,\dots,X_n$, and computing the $L^1$ error in \eqref{eq:eps-approx} of course requires $O(n)$ operations.  
In practice, it is often the case that one need not rank all $n$ samples (e.g., in a streaming application~\cite{gilbert2007}), and in such cases the entire algorithm is constant or sublinear in $n$ in the sense described above.

\subsection{Evaluation of Algorithm \ref{alg:ndom}}

We evaluated our proposed algorithm in dimension $d=2$ for a uniform density and a mixture of Gaussians given by $f(x) = \frac{1}{4}\sum_{i=1}^4 g_i(x)$,
where each $g_i:\R^2\to\R$ is a multivariate Gaussian density with covariance matrix $\Sigma_i$ and mean $\mu_i$.  We write the covariance matrix in the form $\Sigma_i=R_{\theta_i} {\rm diag}(\lambda_{i,1},\lambda_{i,2}) R^T_{\theta_i}$, where $R_{\theta}$ denotes a rotation  matrix, and $\lambda_{i,1}$, $\lambda_{i,2}$ are the eigenvalues.
The values for $\lambda_{i,j}, \mu_i$ and $\theta_i$ are given in Table \ref{tab:param}, and the density is illustrated in Figure \ref{fig:gauss-den}.
\begin{figure}[t!]
\begin{minipage}{\linewidth}
      \centering
      \begin{minipage}{0.4\linewidth}
         \begin{table}[H]
            \begin{tabular}{| l | l | l | l | l |}
               \hline
               & $\scriptstyle\lambda_{i,1}$ & $\scriptstyle\lambda_{i,2}$ & $\scriptstyle\theta_i$ & $\scriptstyle(\mu_{i,1},\mu_{i,2})$ \\ \hline
               $\scriptstyle g_1$ & $\scriptstyle 0.01$ & $\scriptstyle 0.00025$ & $\scriptstyle \frac{\pi}{3}$ & $\scriptstyle (0.2,0.5)$ \\\hline
               $\scriptstyle g_2$ & $\scriptstyle 0.0576$ & $\scriptstyle 0.00064$ & $\scriptstyle 0$ & $\scriptstyle (0.5,0.3)$ \\\hline
               $\scriptstyle g_3$ & $\scriptstyle 0.04$ & $\scriptstyle 0.00025$ & $\scriptstyle -\frac{\pi}{6}$ & $\scriptstyle (0.4,0.8)$ \\\hline
               $\scriptstyle g_4$ & $\scriptstyle 0.01$ & $\scriptstyle 0.01$ & $\scriptstyle 0$ & $\scriptstyle (0.8,0.8)$ \\
               \hline
           \end{tabular}
           \caption{Parameter values for mixture of Gaussians density}
           \label{tab:param}
        \end{table}
    \end{minipage}
    \hspace{0.05\linewidth}
      \begin{minipage}{0.5\linewidth}
         \begin{figure}[H]
         \centering
         \includegraphics[width=0.8\textwidth,clip=true,trim = 20 20 20 135]{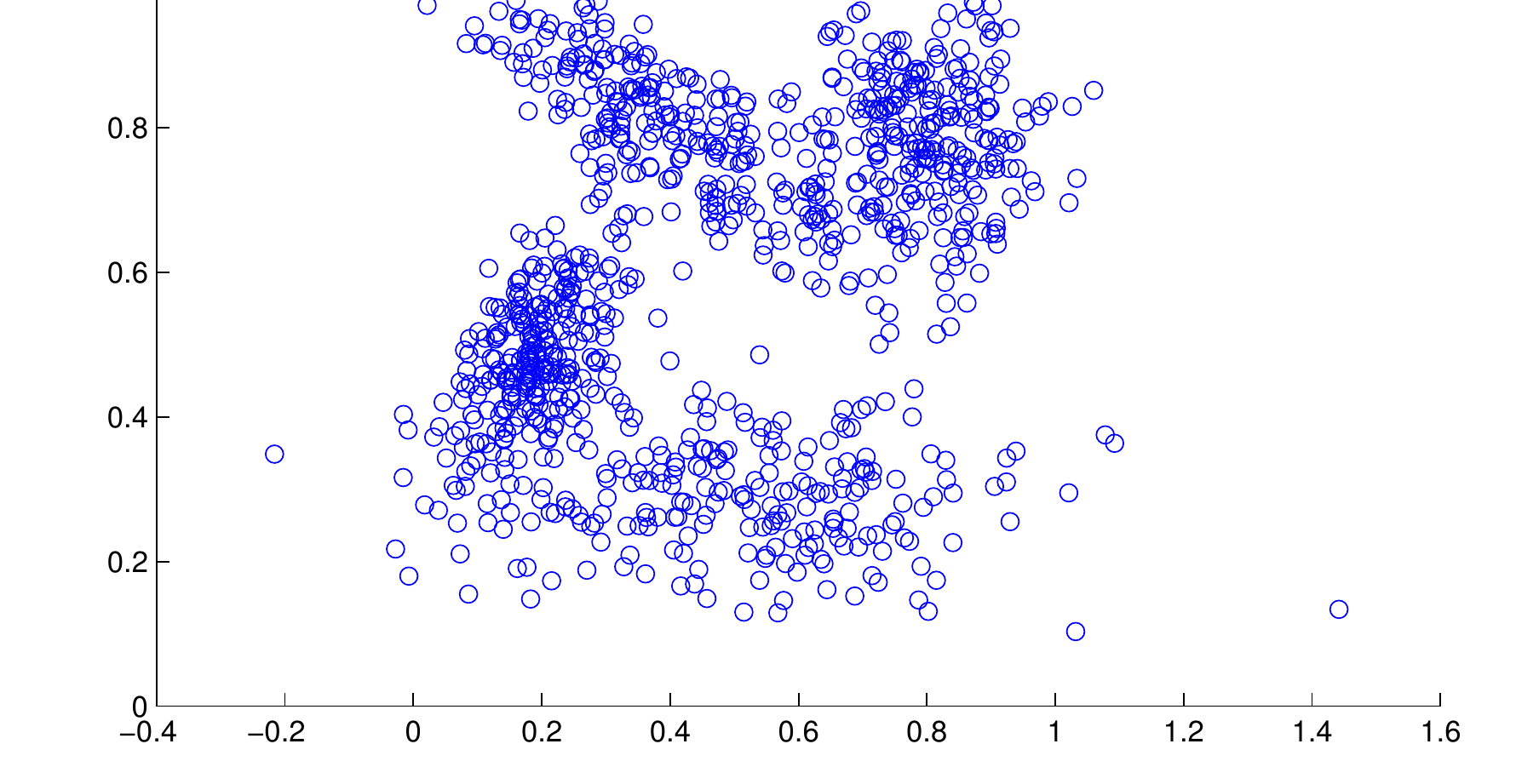}
         \caption{Depiction of random samples from the mixture of Gaussians density.}
         \label{fig:gauss-den}
         \end{figure}
     \end{minipage}
\end{minipage}
\end{figure}

It is important to evaluate the accuracy of the approximate sorting obtained by Algorithm \ref{alg:ndom}.  In practice, the numerical ranks assigned to each point are largely irrelevant, provided the relative orderings between samples are correct. Hence a natural accuracy measure for a given ranking is the fraction of pairs $(X_i,X_j)$ that are ordered correctly.  Recalling that the true Pareto rank is given by $u_n(X_i)$, this can be expressed as
\begin{equation}\label{eq:acc}
{\rm  Accuracy} = \frac{2}{n(n-1)} \sum_{i=1}^n \sum_{j=i+1}^n \psi(u_n(X_i) - u_n(X_j),\hat{U}_h(X_i) - \hat{U}_h(X_j)),
\end{equation}
where  $\psi(x,y) = 1$ if $xy > 0$ and $\psi(x,y)=0$ otherwise.
It turns out that the accuracy scores \eqref{eq:acc} for our algorithm are often very close to 1.  In order to make the plots easier to interpret visually, we have chosen to plot $-\log(1-\text{Accuracy})$ instead of Accuracy in \emph{all} plots.    

Unfortunately, the complexity of computing the accuracy score via \eqref{eq:acc} is $O(n^2)$, which is intractable for even moderate values of $n$.  We note however that \eqref{eq:acc} is, at least formally, a Monte-Carlo approximation of 
\[\int_{\R^d} \int_{\R^d} \psi(U(x) - U(y),U_h(x) - U_h(y))f(x)f(y) \, dxdy.\]
Hence it is natural to use a truncated Monte-Carlo approximation to estimate \eqref{eq:acc}.  This is done by selecting $n$ pairs $(X_{i_1},X_{j_1}),\dots,(X_{i_n},X_{j_n})$ at random and computing
\[\frac{1}{n} \sum_{k=1}^n \psi(u_n(X_{i_k}) - u_n(X_{j_k}),\hat{U}_h(X_{i_k}) - \hat{U}_h(X_{j_k})).\]
The complexity of the Monte-Carlo approximation is $O(n)$.  In all plots in the paper, we computed the Monte-Carlo approximation $10$ times and plotted means and error bars corresponding to a $95\%$ confidence interval.  In all of the figures, the confidence intervals are sufficiently small so that they are contained within the data point itself.

We can see in Figure \ref{fig:acc} that we can achieve excellent accuracy while maintaining a fixed grid and subsample size as a function of $n$.  We also see that, as expected, the accuracy increases when one uses more grid points for solving the PDE and/or more subsamples for estimating the density.  We also see that the algorithm works better on uniformly distributed samples than on the mixture of Gaussians.  Indeed, it is quite natural to expect the density estimation and numerical scheme to be less accurate when $f$ changes rapidly.  
\begin{figure}
\centering
\subfigure[Uniform distribution]{\includegraphics[width=0.45\textwidth,clip=true,trim = 0 0 0 20]{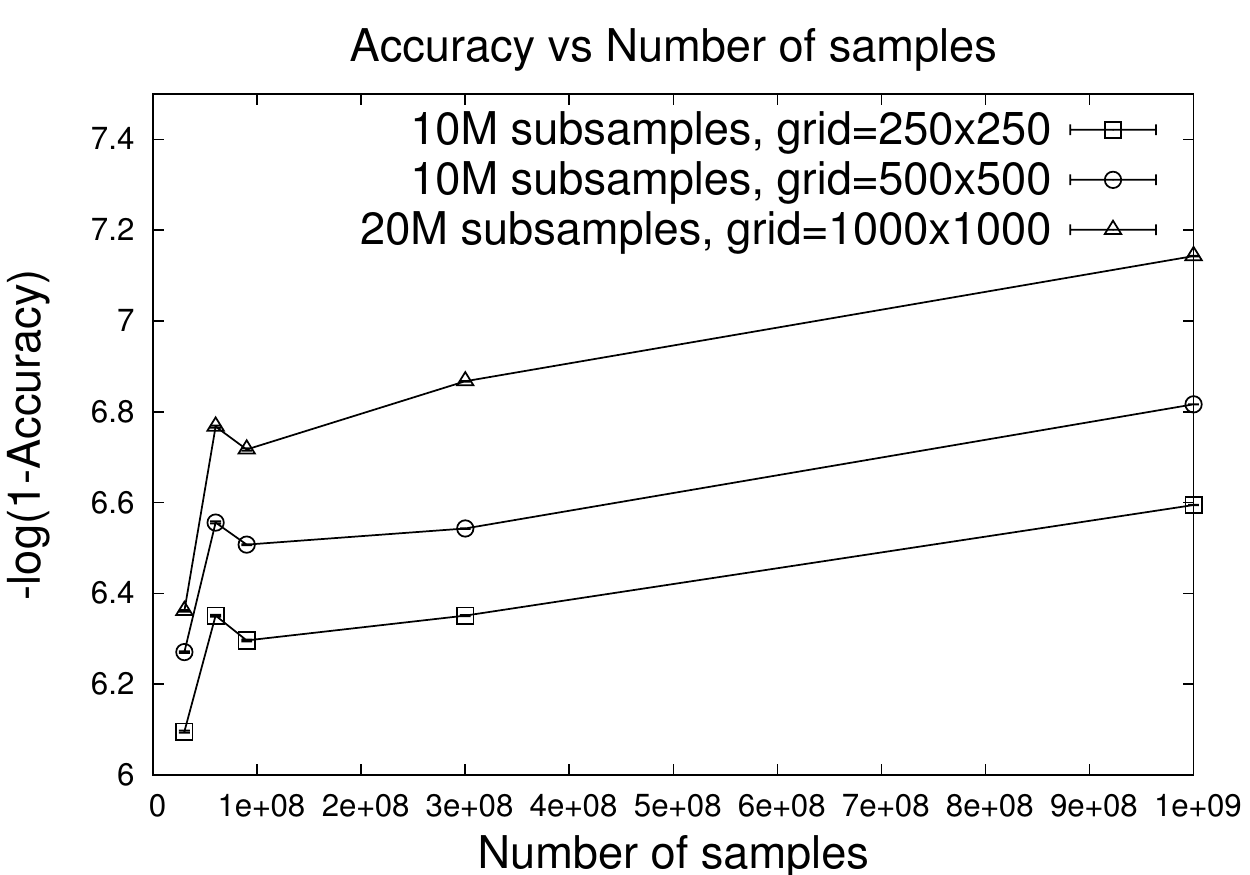}}
\subfigure[Mixture of Gaussians]{\includegraphics[width=0.45\textwidth,clip=true,trim = 0 0 0 20]{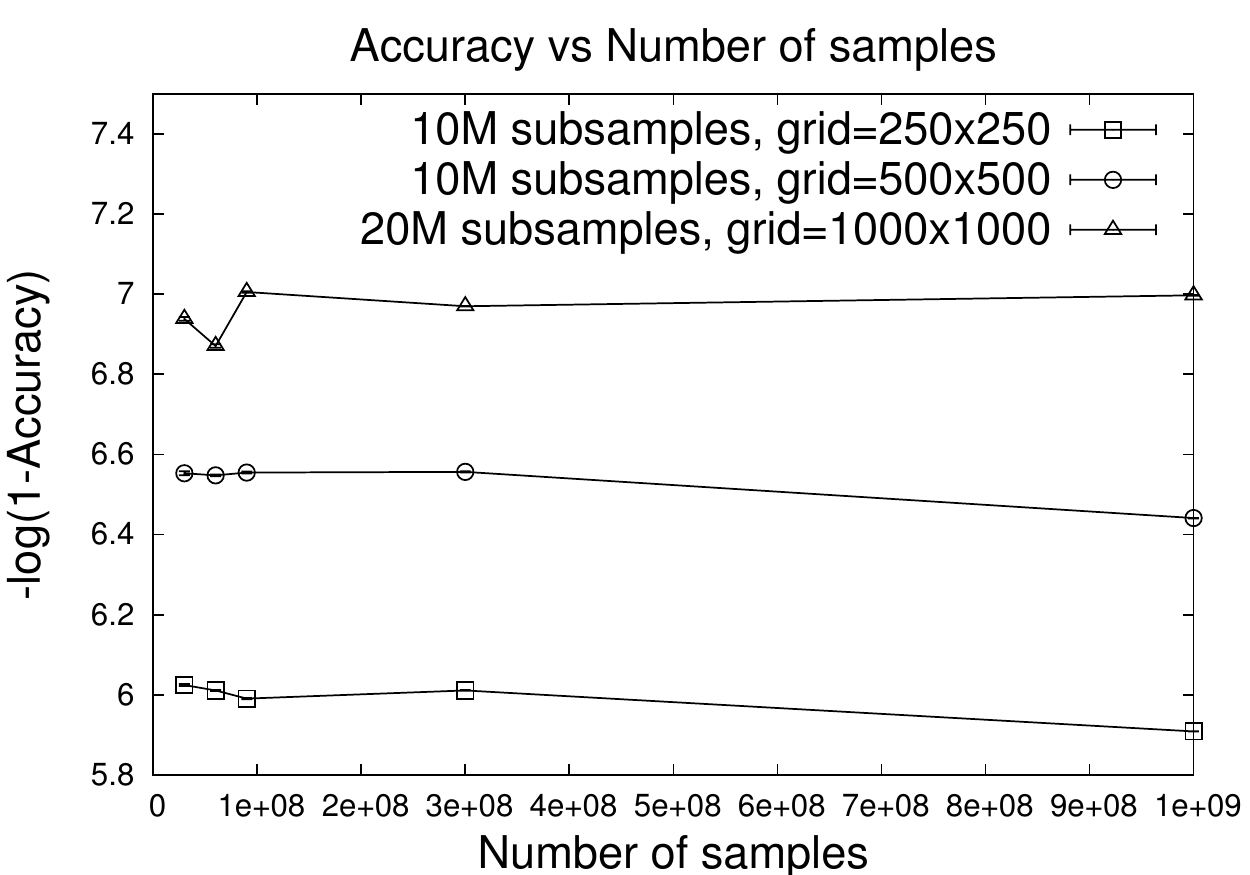}}
\caption{Comparison of accuracy versus number of samples for various grid sizes and number of subsamples $k$ used to estimate $f$.}
\label{fig:acc}
\end{figure}

We compared the performance of our algorithm against the fast two dimensional non-dominated sorting algorithm presented in~\cite{felsner1999}, which takes $O(n\log n)$ operations to sort $n$ points. The code for both algorithms was written in C++ and was compiled on the same architecture with the same compiler optimization flags.  Figure \ref{fig:cputime} shows a comparison of the CPU time used by each algorithm.  For our fast approximate sorting, we show the CPU time required to solve the PDE (Steps 1-3 in Algorithm \ref{alg:ndom}) separately from the CPU time required to execute all of Algorithm \ref{alg:ndom}, since the former is sublinear in $n$.
\begin{figure}
\centering
\subfigure[CPU time]{\includegraphics[width=0.45\textwidth,clip=true,trim = 0 0 0 20]{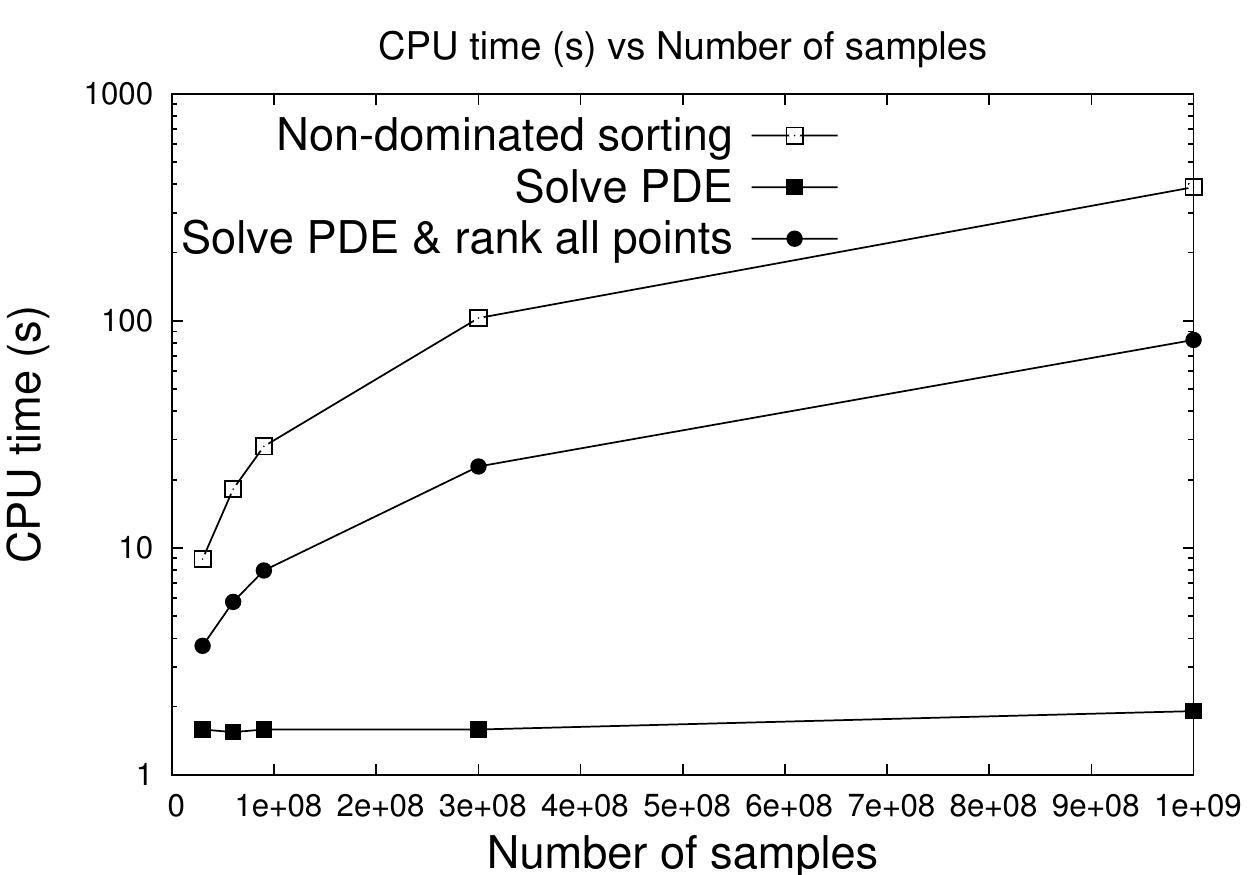}\label{fig:cputime}}
\subfigure[Accuracy vs grid size]{\includegraphics[width=0.45\textwidth,clip=true,trim = 0 0 0 20]{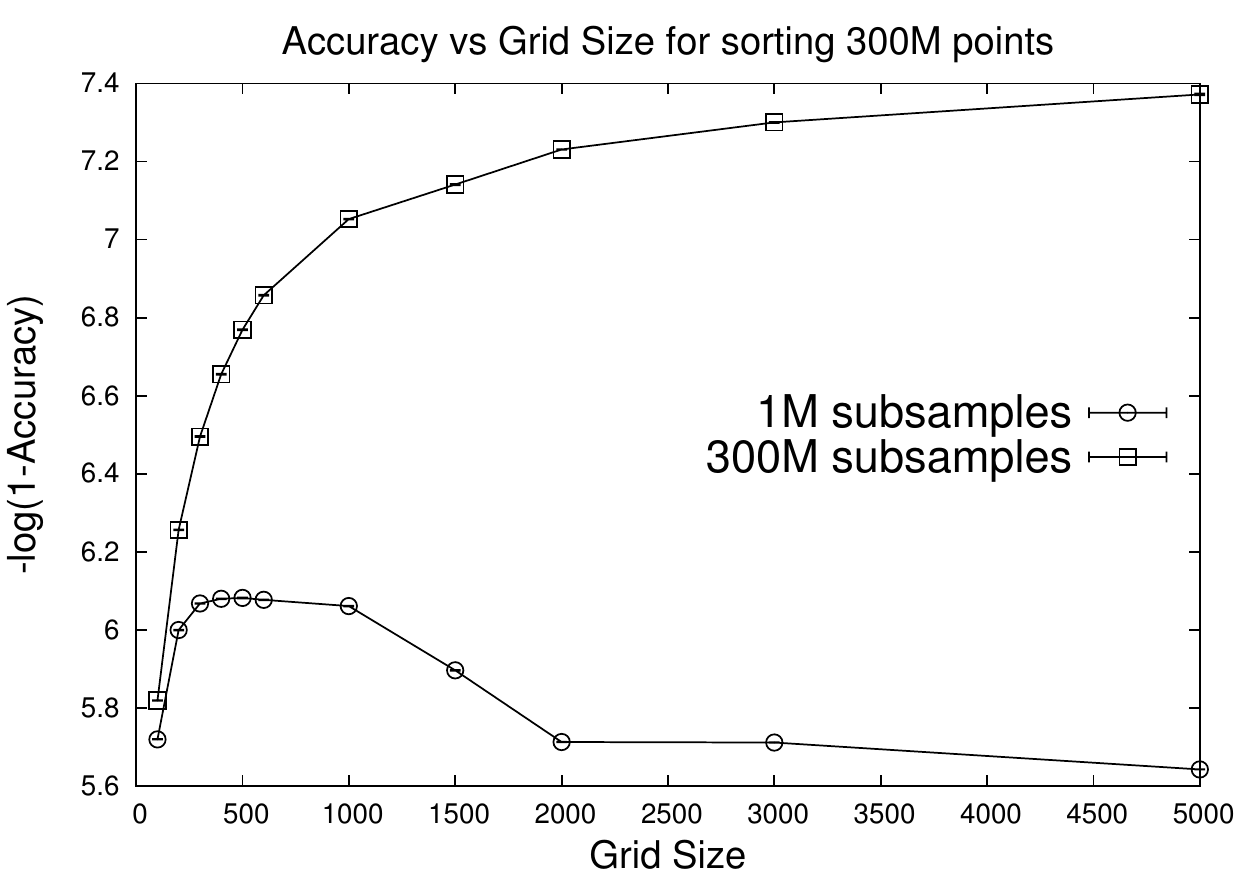}\label{fig:grid}}
\caption{(a) Comparison of CPU time versus number of samples for a grid size of $250\times 250$ and $k=10^7$ subsamples for estimating the density. (b) Comparison of accuracy versus grid size for $k=10^6$ and $k=3\times 10^8$ subsamples for non-dominated sorting of $n=3\times 10^8$ points. Notice that when $k$ is small compared to $n$ it is not always beneficial to use a finer grid for solving the PDE and estimating the density.}
\end{figure}

It is also interesting to consider the relationship between the grid size and the number of subsamples $k$. In Figure \ref{fig:grid}, we show accuracy versus grid size for $k=10^6$ and $k=3\times 10^8$ subsamples for non-dominated sorting of $n=3\times 10^8$ points.  Notice that for $k=10^6$ subsamples, it is not beneficial to use a finer grid than approximately $500 \times 500$.  This is quite natural in light of the error estimate on Algorithm \ref{alg:ndom}~\eqref{eq:final-rate}.


\subsection{Subset ranking}

There are certainly other ways one may think of to perform fast approximate sorting without invoking the PDE (P).  One natural idea would be to perform non-dominated sorting on a random subset of $X_1,\dots,X_n$, and then rank all $n$ points via some form of interpolation.  We will call such an algorithm \emph{subset ranking} (in contrast to the PDE-based ranking we have proposed).   Although such an approach is quite intuitive, it is important to note that there is, at present, no theoretical justification for such an approach.  Nonetheless, it is important to compare the performance of our algorithm against such an algorithm.  

Let us describe how one might implement a subset ranking algorithm.  As described above, the first step is to select a random subset of size $k$ from $X_1,\dots,X_n$. Let us call the subset $Y_1,\dots,Y_k$.  We then apply non-dominated sorting to $Y_1,\dots,Y_k$, which generates Pareto rankings $u_k(Y_i)$ for each $Y_i$.  The final step is to rank $X_1,\dots,X_n$ via interpolation.  There are many ways one might approach this.  In similar spirit to our PDE-based ranking (Algorithm 1), we use grid interpolation, using the same grid size as used to solve the PDE.  We compute a ranking at each grid point by averaging the ranks of all samples from $Y_1,\dots,Y_k$ that fall inside the corresponding grid cell.  The ranking of an arbitrary sample $X_i$ is then computed by linear interpolation using the ranks of neighboring grid points.  In this way, the rank of $X_i$ is an average of the ranks of nearby samples from $Y_1,\dots,Y_k$, and there is a grid size parameter which allows a meaningful comparison with PDE-based ranking (Algorithm 1).

Figure \ref{fig:subsample} shows the accuracy scores for PDE-based ranking (Algorithm 1) and subset ranking of $n=10^8$ samples drawn from the uniform and mixture of Gaussians distributions.  A grid size of $250\times 250$ was used for both algorithms, and we varied the number of subsamples from $k=10^3$ to $k=10^8$.  Notice a consistent accuracy improvement when using PDE-based ranking versus subset ranking, when the number of subsamples is significantly less than $n$.  It is somewhat surprising to note that subset ranking has much better than expected performance.   As mentioned previously, to our knowledge there is no theoretical justification for such a performance when $k$ is small.   

\begin{figure}
\centering
\subfigure[Uniform distribution]{\includegraphics[width=0.45\textwidth,clip=true,trim = 0 0 0 22]{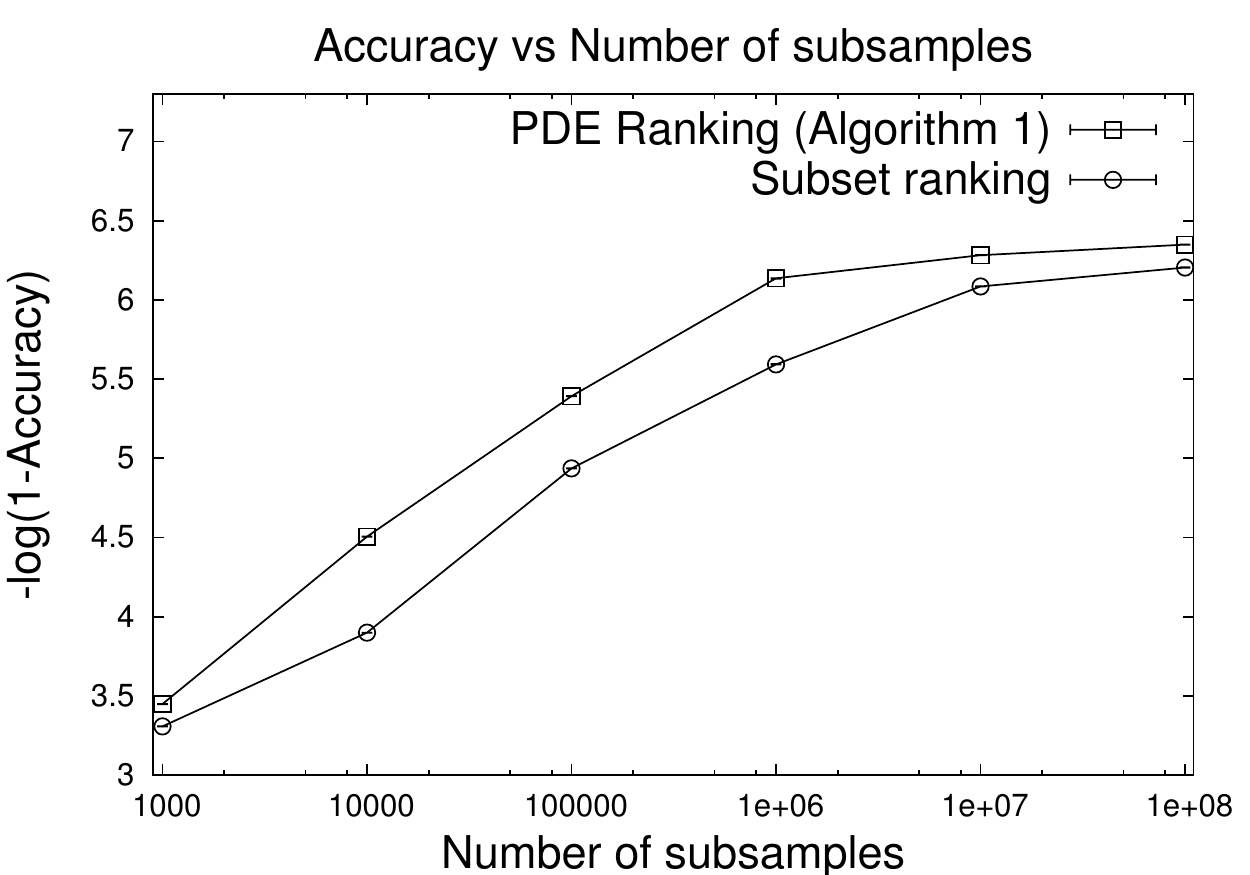}}
\subfigure[Mixture of Gaussians]{\includegraphics[width=0.45\textwidth,clip=true,trim = 0 0 0 22]{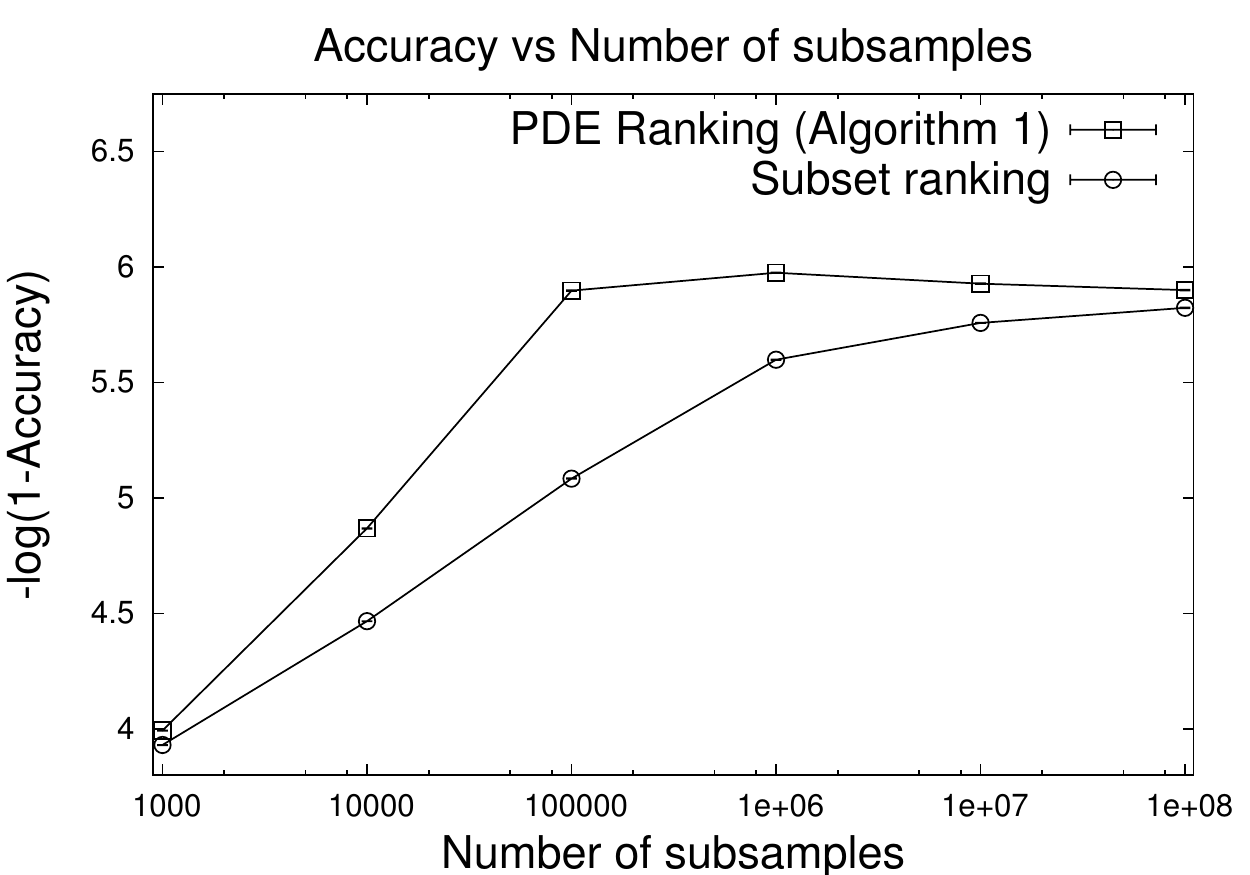}}
\caption{Comparison of PDE-based ranking (Algorithm 1) and naive subset interpolation ranking for sorting $n=10^8$ samples.  Accuracy scores are shown for various numbers of subsamples ranging from $k=10^3$ to $k=10^8$.}
\label{fig:subsample}
\end{figure}

\subsection{Application in anomaly detection}
\label{sec:real}

We now demonstrate Algorithm \ref{alg:ndom} on a large scale real data application of anomaly detection~\cite{hsiao2012}.  The data consists of thousands of pedestrian trajectories, captured from an overhead camera, and the goal is to differentiate nominal from anomalous pedestrian behavior in an unsupervised setting.  The data is part of the Edinburgh Informatics Forum Pedestrian Database and was captured in the main building of the School of Informatics at the University of Edinburgh~\cite{majecka2009}.  Figure \ref{fig:traj} shows 100 of the over 100,000 trajectories captured from the overhead camera.  

\begin{figure}
\centering
\subfigure[Example trajectories]{\includegraphics[clip=true, trim = 50 0 50 50, width=0.32\linewidth]{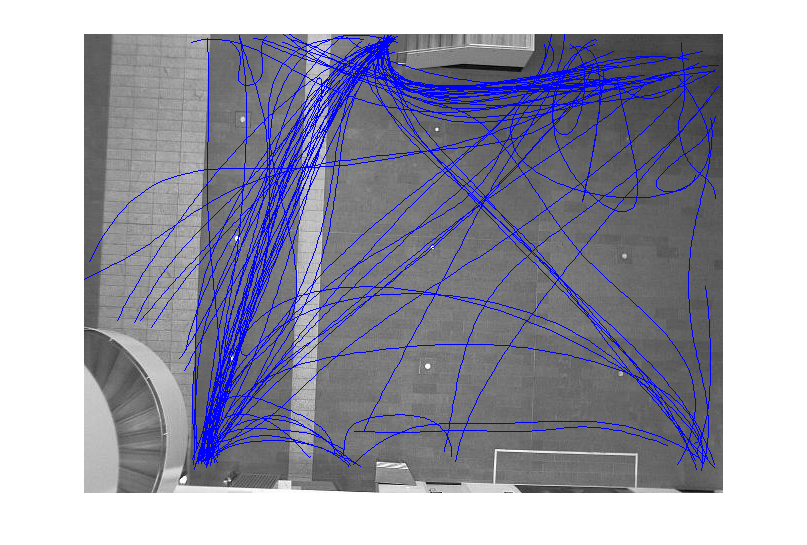}\label{fig:traj}}
\subfigure[50000 Pareto points]{\includegraphics[clip=true, trim = 10 10 20 25, width=0.32\linewidth]{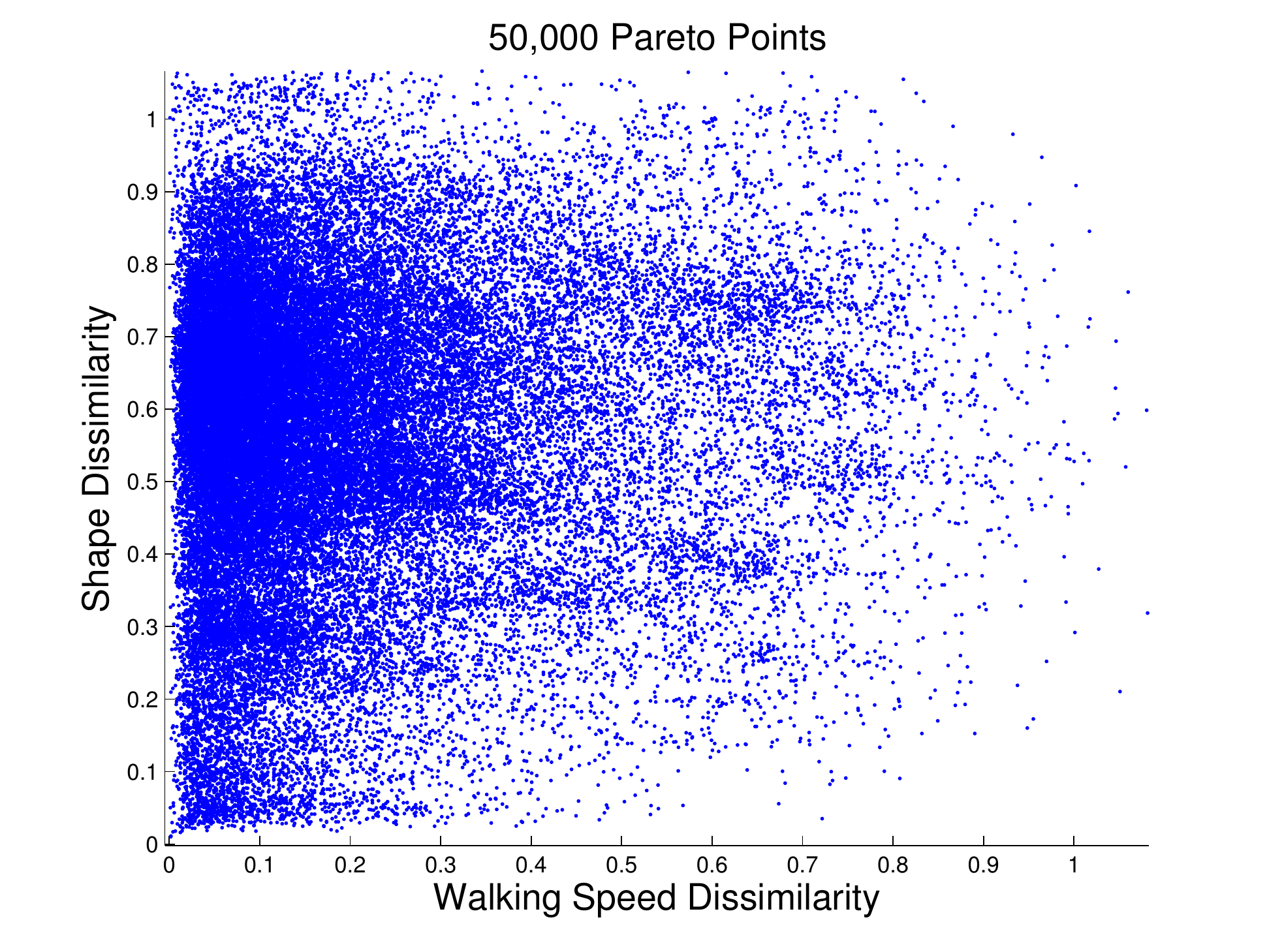}\label{fig:points}}
\subfigure[Pareto fronts]{\includegraphics[clip=true, trim = 10 10 20 10, width=0.32\linewidth]{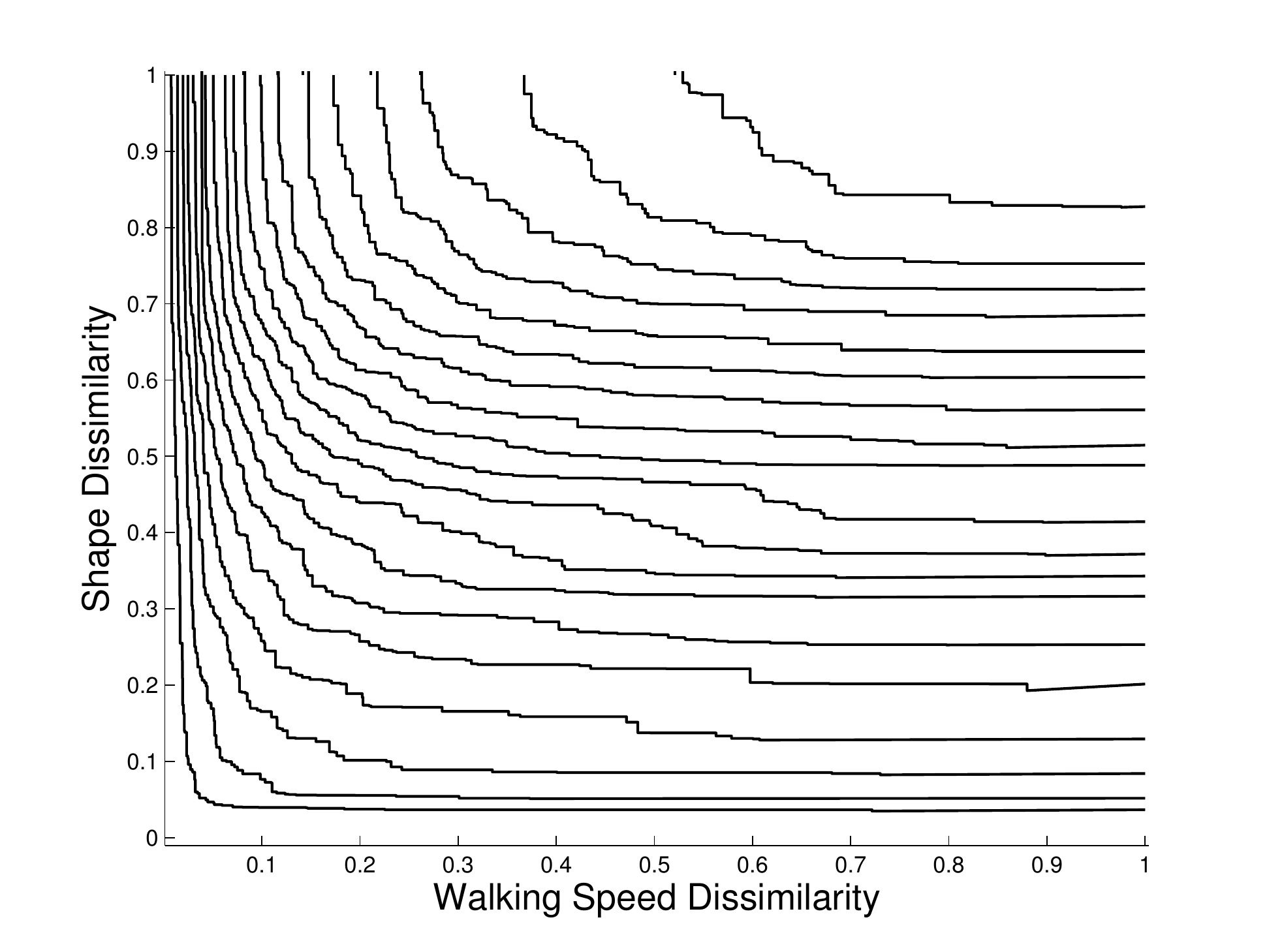}\label{fig:fronts}}
\caption{(a) Example pedestrian trajectories, (b) Plot of 50000 of the approximately $6\times 10^9$ Pareto points, (c) 30 evenly spaced Pareto fronts computed from the 50000 points in (b).}
\end{figure}

The approach to anomaly detection employed in~\cite{hsiao2012} utilizes multiple criteria to measure the dissimilarity between trajectories, and combines the information using a Pareto-front method, and in particular, non-dominated sorting. The database consists of a collection of trajectories $\{\gamma_1,\dots,\gamma_M\}$, where $M=110035$, and the criteria used in~\cite{hsiao2012} are a walking speed dissimilarity, and a trajectory shape dissimilarity.  Given two trajectories $\gamma_i,\gamma_j:[0,1] \to [0,1]^2$, the walking speed dissimilarity $c_{speed}(\gamma_i,\gamma_j)$ is the $L^2$ distance between velocity histograms of each trajectory, and the trajectory shape dissimilarity is the $L^2$ distance between the trajectories themselves, i.e., $c_{shape}(\gamma_i,\gamma_j) = \|\gamma_i-\gamma_j\|_{L^2(0,1)}$.  There is then a Pareto point $X_{i,j} = (c_{speed}(\gamma_i,\gamma_j),c_{shape}(\gamma_i,\gamma_j))$ for every pair of trajectories $(\gamma_i,\gamma_j)$, yielding $\binom{M}{2} \approx 6\times 10^9$ Pareto points.  Figure \ref{fig:points} shows an example of 50000 Pareto points and Figure \ref{fig:fronts} shows the respective Pareto fronts.  In~\cite{hsiao2012}, only 1666 trajectories from one day were used, due to the computational complexity of computing the dissimilarities and non-dominated sorting.

The anomaly detection algorithm from~\cite{hsiao2012} performs non-dominated sorting on the Pareto points $\{X_{i,j}\}_{1 \leq i< j \leq M}$, and uses this sorting to define an anomaly score for every trajectory $\gamma_i$.  Let $n=\binom{M}{2}$ and let $u_n:\R^2\to \R$ denote the longest chain function corresponding to this non-dominated sorting.  The anomaly score for a particular trajectory $\gamma_i$ is defined as
\[s_i = \frac{1}{M}\sum_{j=1}^M u_n(c_{speed}(\gamma_i,\gamma_j),c_{shape}(\gamma_i,\gamma_j)),\]
and trajectories with an anomaly score higher than a predefined threshold $\sigma$ are deemed anomalous. 

Using Algorithm \ref{alg:ndom}, we can approximate $u_n$ using only a small fraction of the Pareto points $\{X_{i,j}\}_{1 \leq 1 < j\leq M}$, thus alleviating the computational burden of computing all pairwise dissimilarities.  Figure \ref{fig:acc-real} shows the accuracy scores for Algorithm \ref{alg:ndom} and subset ranking versus the number of subsamples $k$ used in each algorithm. Due to the memory requirements for non-dominated sorting, we cannot sort datasets significantly larger than than $10^9$ points. Although there is no such limitation on Algorithm 1, it is important to have a ground truth sorting to compare against.   Therefore we have used only $44722$ out of $110035$ trajectories, yielding approximately $10^9$ Pareto points.   For both algorithms, a $500\times500$ grid was used for solving the PDE and interpolation.  Notice the accuracy scores are similar to those obtained for the test data in Figure \ref{fig:acc}.  This is an intriguing observation in light of the fact that $\{X_{i,j}\}_{1\leq i < j \leq M}$ are \emph{not} \iid, since they are elements of a Euclidean dissimilarity matrix.

\begin{figure}
\centering
\includegraphics[width=0.43\textwidth,clip=true,trim = 0 0 0 0]{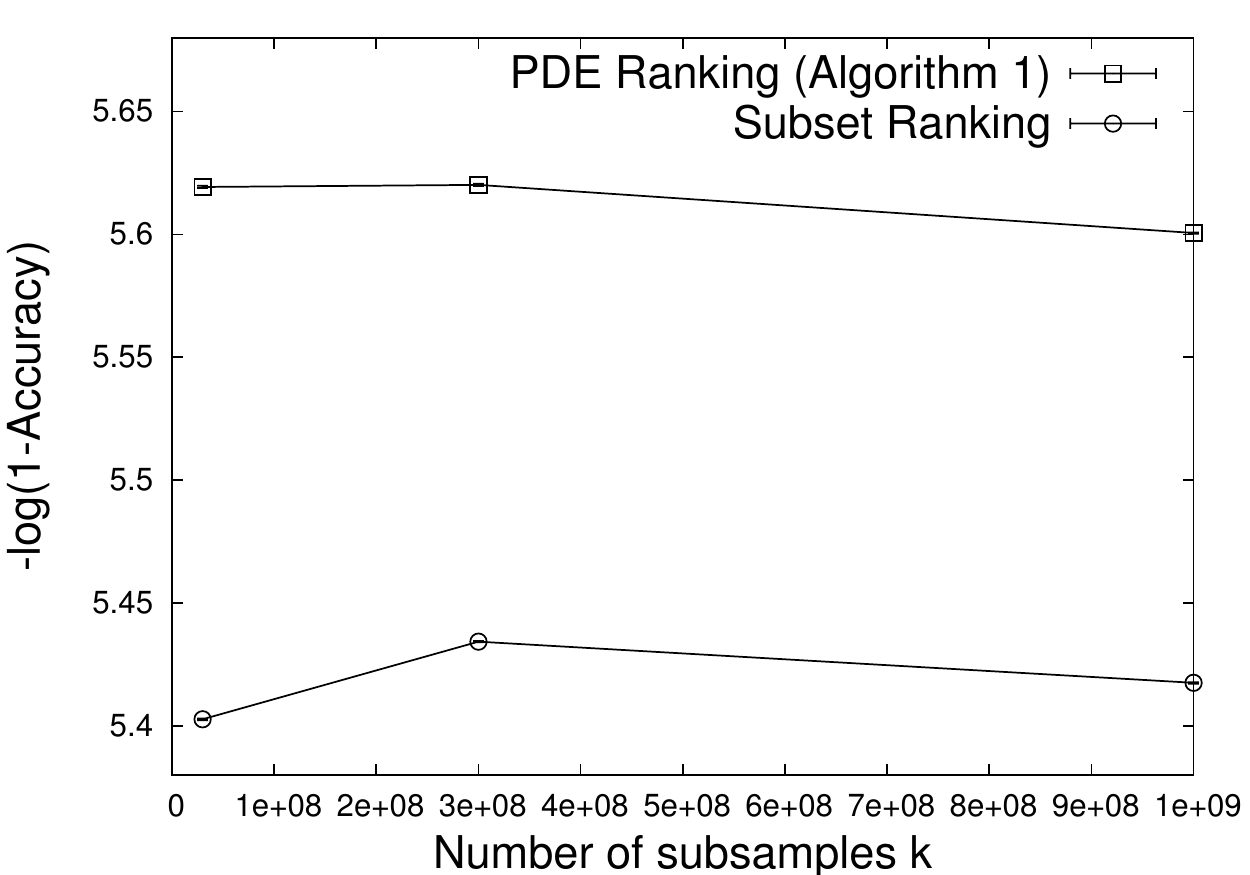}
\caption{Accuracy scores for Algorithm \ref{alg:ndom} and subset ranking for sorting $10^9$ Pareto points from the pedestrian anomaly detection problem versus the number of subsamples $k$.}\label{fig:acc-real}
\end{figure}

\subsection{Discussion}
\label{sec:disc}

We have provided theory that demonstrates that, when $X_1,\dots,X_n$ are \iid~in $\R^2$ with a nicely behaved density function $f$, the numerical scheme (S) for  (P) can be utilized to perform fast approximate non-dominated sorting with a high degree of accuracy. We have also shown that in a real world example with non-\iid~data, the scheme (S) still obtains excellent sorting accuracy.    We expect the same algorithm to be useful in dimensions $d=3$ and $d=4$ as well, but of course the complexity of solving (P) on a grid increases exponentially fast in $d$.  In higher dimensions, one could explore other numerical techniques for solving (P) which do not utilize a fixed grid~\cite{cecil2004}. At present, there is also no good algorithm for non-dominated sorting in high dimensions.  The fastest known algorithm is $O(n(\log n)^{d-1})$~\cite{jensen2003}, which becomes intractable when $n$ and $d$ are large.  

This algorithm has the potential to be particularly useful in the context of big data streaming problems~\cite{gilbert2007}, where it would be important to be able to construct an approximation of the Pareto depth function $u_n$ without visiting all the datapoints $X_1,\dots,X_n$, as they may be arriving in a data stream and it may be impossible to keep a history of all samples.  In such a setting, one could slightly modify Algorithm \ref{alg:ndom} so that upon receiving a new sample, the estimate $\hat{f}_h$ is updated, and every so often the scheme (S) is applied to recompute the estimate of $\hat{U}_h$. 

There are certainly many situations in practice where the samples $X_1,\dots,X_n$ are not \iid, or the density $f$ is not nicely behaved.  In these cases, there is no reason to expect our algorithm to have much success, and hence we make no claim of universal applicability.  However, there are many cases of practical interest where these assumptions are valid, and hence this algorithm can be used to perform fast non-dominated sorting in these cases. Furthermore, as we have demonstrated in Section \ref{sec:real}, there are situations in practice where the \iid~assumption is violated, yet our proposed algorithm maintains excellent accuracy and performance.

We proposed a simple \emph{subset ranking} algorithm based on sorting a small subset of size $k$ and then performing interpolation to rank all $n$ samples.  Although there is currently no theoretical basis for such an algorithm, we showed that subset ranking achieves surprisingly high accuracy scores and is only narrowly outperformed by our proposed PDE-based ranking.  The simplicity of subset ranking makes it particularly appealing, but more research is needed to prove that it will always achieve such high accuracy scores for moderate values of $k$.

We should note that there are many obvious ways to improve our algorithm.  Histogram approximation to probability densities is quite literally the most basic density estimation algorithm, and one would expect to obtain better results with more sophisticated estimators.   It would also be natural to perform some sort of histogram equalization to $X_1,\dots,X_n$ prior to applying our algorithm in order to spread the samples out more uniformly and smooth out the effective density $f$.  Provided such a transformation preserves the partial order $\leqq$ it would not affect the non-dominated sorting of $X_1,\dots,X_n$.  In the case that $f$ is separable (a product density), one can perform histogram equalization on each coordinate independently to obtain uniformly distributed samples. We leave these and other potential improvements to future work; our purpose in this paper has been to demonstrate that one can obtain excellent results with a very basic algorithm.

\section*{Acknowledgments}

We thank Ko-Jen Hsiao for providing code for manipulating the pedestrian trajectory database.  

\appendix
\section*{Appendix}
We use the following minor extension of the Arzel\`a-Ascoli Theorem in Section \ref{sec:convergence-proof}.
Let $X$ be a compact metric space.  We say that a sequence $\{f_n\}_{n=1}^\infty$ of real-valued functions on $X$ is \emph{approximately equicontinuous} if for every $\eps>0$ there exists $\delta > 0$ such that
\begin{equation}\label{eq:def}
\forall x,y \in X, \ |x-y| < \delta \implies |f_n(x) - f_n(y)| < \eps + \frac{1}{n},
\end{equation}
for every $n \in \N$.  
\begin{theorem}
Let $\{f_n\}_{n=1}^\infty$ be approximately equicontinuous and uniformly bounded. Then there exists a subsequence of $\{f_n\}_{n=1}^\infty$ converging uniformly on $X$ to a continuous function $f:X\to \R$. 
\end{theorem}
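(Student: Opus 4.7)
The plan is to mimic the classical Arzel\`a-Ascoli argument, with approximate equicontinuity playing the role of equicontinuity, and to verify that the extra $1/n$ error term is harmless because it tends to zero. First I would exploit compactness of $X$ to select a countable dense subset $\{x_k\}_{k=1}^\infty \subset X$, and then use the uniform boundedness of $\{f_n\}$ together with a standard diagonal extraction to produce a subsequence (still denoted $\{f_n\}$) that converges pointwise on every $x_k$.

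Next I would upgrade pointwise convergence to uniform convergence via a Cauchy argument. Given $\eps>0$, approximate equicontinuity furnishes $\delta>0$ such that $|x-y|<\delta$ implies $|f_n(x)-f_n(y)| < \eps + 1/n$ for every $n$. Compactness yields a finite subcover by $\delta$-balls centered at $x_{k_1},\dots,x_{k_N}$. Pointwise Cauchyness on these finitely many anchor points gives $N_0$ such that for $n,m\geq N_0$ and each $i$, $|f_n(x_{k_i})-f_m(x_{k_i})|<\eps$. For an arbitrary $x\in X$, choosing $x_{k_i}$ within $\delta$ of $x$ and applying the triangle inequality gives
\[|f_n(x)-f_m(x)| \;\leq\; \Bigl(\eps+\tfrac{1}{n}\Bigr) + \eps + \Bigl(\eps+\tfrac{1}{m}\Bigr),\]
which is uniformly less than $4\eps$ once $n,m$ are large. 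Thus $\{f_n\}$ is uniformly Cauchy on $X$ and converges uniformly to some limit $f$.

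Finally, to see that $f$ is continuous, I would pass to the limit inside the approximate equicontinuity inequality: for any $x,y$ with $|x-y|<\delta$,
\[|f(x)-f(y)| \;=\; \lim_{n\to\infty}|f_n(x)-f_n(y)| \;\leq\; \lim_{n\to\infty}\Bigl(\eps+\tfrac{1}{n}\Bigr) \;=\; \eps,\]
so $f$ is in fact uniformly continuous on $X$. The only deviation from the standard proof is the vanishing error term $1/n$, which can always be absorbed into a slightly larger $\eps$ once $n$ is taken large. Because this term disappears in every limiting step, I do not anticipate any genuine obstacle; the argument is essentially bookkeeping around the classical one.
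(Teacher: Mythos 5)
Your proposal is correct and follows essentially the same route as the paper: diagonal extraction on a countable dense set, a finite cover by small balls from compactness, and a three-term triangle inequality in which the $1/n$ error is absorbed once $n$ is large, yielding uniform Cauchyness. Your final step passing to the limit in the approximate-equicontinuity inequality to get continuity of $f$ is a detail the paper leaves implicit (the $f_n$ themselves need not be continuous), and it is a worthwhile addition.
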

\begin{proof}
Let $\{x_i\}_{i=1}^\infty$ be a countably dense set in $X$.  By a Cantor diagonal argument, we can extract a subsequence $\{f_{n_k}\}_{k=1}^\infty$ such that for all $i \in \N$, $\{f_{n_k}(x_i)\}_{k=1}^\infty$ is a convergent sequence.

Let $\eps > 0$.  
Since $\{f_n\}_{n=1}^\infty$ is approximately equicontinuous there exists $\delta>0$ such that for all $n$ we have
\begin{equation}\label{eq:b}
|f_n(x) - f_n(y) | < \frac{\eps}{4} + \frac{1}{n} \ \ \text{for all } x,y \in X \text{ with } |x-y| < \delta.
\end{equation}
The collection of open balls $\{B_{\delta/2}(z)\}_{z \in X}$ forms an open cover of $X$.  Since $X$ is compact, there exists a finite subcover $B_1,\dots,B_M$ for some integer $M$.
Without loss of generality we may assume that $x_i \in B_i$.  Now let $x \in X$.  By \eqref{eq:b} we have
\begin{align*}
|f_{n_k}(x) - f_{n_j}(x)|  &{}\leq{} |f_{n_k}(x) - f_{n_k}(x_i)| + |f_{n_k}(x_i) - f_{n_j}(x_i)| + |f_{n_j}(x_i) - f_{n_j}(x)| \\
&{}<{} \frac{\eps}{2} + \frac{1}{n_k} + \frac{1}{n_j}+|f_{n_k}(x_i) - f_{n_j}(x_i)|,
\end{align*}
for some $i \in \{1,M\}$ and any $k,j$.
Hence we have
\[\|f_{n_k} - f_{n_j}\|_{L^\infty(X)} \leq  \frac{\eps}{2} + \frac{1}{n_k} + \frac{1}{n_j}+\sup_{1\leq i\leq M}|f_{n_k}(x_i) - f_{n_j}(x_i)|.\]
It follows that $\{f_{n_k}\}_{k=1}^\infty$ is Cauchy in $L^\infty$, which completes the proof.
\end{proof}

\end{document}